\newtheorem{definition}{Definition}[section]
\newtheorem{theorem}[definition]{Theorem}
\newtheorem{lemma}[definition]{Lemma}
\newtheorem{corollary}[definition]{Corollary}
\newtheorem{remark}[definition]{Remark}
\def\N{{\mathbb N}}
\def\Z{{\mathbb Z}}
\def\R{{\mathbb R}}
\def\C{{\mathbb C}}
\def\Q{{\mathbb Q}}
\newcommand{\lt}{{L^2(\R)}}
\newcommand{\re}{{\mathrm{Re} \,}}
\newcommand{\ft}{{\mathcal{F}}}
\begin{document}

\title[Arithmetic progressions and holomorphic phase retrieval]{Arithmetic progressions and holomorphic phase retrieval}

\author[Lukas Liehr]{Lukas Liehr}

\address{Faculty of Mathematics, University of Vienna, Oskar-Morgenstern-Platz 1, 1090 Vienna, Austria}
\email{lukas.liehr@univie.ac.at}

\subjclass[2020]{30D20, 42A38, 94A12, 94A20}
\keywords{phase retrieval, arithmetic progression, sampling}

\begin{abstract}
We study the determination of a holomorphic function from its absolute value. Given a parameter $\theta \in \mathbb{R}$, we derive the following characterization of uniqueness in terms of rigidity of a set $\Lambda \subseteq \mathbb{R}$: if $\mathcal{F}$ is a vector space of entire functions containing all exponentials $e^{\xi z}, \, \xi \in \mathbb{C} \setminus \{ 0 \}$, then every $F \in \mathcal{F}$ is uniquely determined up to a unimodular phase factor by $\{ |F(z)| : z \in e^{i\theta}(\R + i \Lambda)  \}$ if and only if $\Lambda$ is not contained in an arithmetic progression $a\mathbb{Z}+b$. Leveraging this insight, we establish a series of consequences for Gabor phase retrieval and Pauli-type uniqueness problems. For instance, $\Z \times \tilde \Z$ is a uniqueness set for the Gabor phase retrieval problem in $L^2(\R_+)$, provided that $\tilde \Z$ is a suitable perturbation of the integers.
\end{abstract}

\maketitle

\section{Introduction and results}\label{sec:1}

\subsection{General statement}

Let $\mathcal F$ be a set of holomorphic functions defined on some domain $\Omega$. The uniqueness problem in \emph{holomorphic phase retrieval} concerns the question whether functions in $\mathcal{F}$ are distinguishable from their absolute values on some subset $\mathcal{U} \subseteq \Omega$, that is, from the values $\{ |F(u)| : u \in \mathcal{U} \}$.
It is clear that in the absence of phase information, every function $F$ is indistinguishable from $\tau F$, provided that $\tau \in \C$ satisfies $|\tau| = 1$. Hence, the ambiguity of a multiplicative factor with unit modulus is unavoidable. Motivated by this, we define for two complex-valued functions $F,H$ (not necessarily holomorphic) an equivalence relation via
$$
F \sim H \iff  \left( \, \exists \tau \in \C, \ |\tau|=1 : \ F=\tau H \, \right).
$$
We say that $\mathcal{U} \subseteq \Omega$ \emph{is a uniqueness set for the phase retrieval problem in} $\ft$, if for every $F,H \in \ft$, we have
$$
|F(z)| = |H(z)|, \ z \in \mathcal{U} \implies F \sim H.
$$
The holomorphic phase retrieval problem arises naturally from phase retrieval problems associated with specific integral transforms which are intimately linked to spaces of holomorphic functions. This includes, for instance, the Fock space induced by the Gabor transform \cite{GrohsRathmair,grohsliehr1}, Paley-Wiener spaces induced by the Fourier transform of compactly supported functions \cite{Lai2021,bianchi}, or Bergman and Hardy spaces which are related to the Cauchy Wavelet transform \cite{Mallat2015,bart}. The general theory on holomorphic phase retrieval is impacted by these special cases.

It was recently established in a series of papers, that sets $\mathcal{U}$ which are equidistant parallel lines or lattices, do not form uniqueness sets for the phase retrieval problem induced by the Gabor transform \cite{grohsLiehr3,alaifari2020phase,grohsLiehr4}. The present article centers around the following question.

\vspace{0.2cm}

\begin{quote}
{\it 
    Is this non-uniqueness property a consequence of a general principle associated with the rigid structure of the considered sets $\mathcal{U}$, and the underlying analyticity?
    }
\end{quote}

\vspace{0.2cm}

Indeed, the following characterization in the general setting of entire functions shows that uniqueness fails precisely in the rigid setting. This constitutes the main result of the article. Its consequences related to various other phase retrieval problems are stated in the upcoming sections. In order to formulate our results in a precise manner, we introduce the following notation: for a subset $\Xi \subseteq \C$, we denote by $\mathcal{E}(\Xi) \coloneqq \{ z \mapsto e^{\xi z} : \xi \in \Xi \}$ the system of all exponentials with complex frequencies in $\Xi$. Further, we set $\C^* \coloneqq \C \setminus \{ 0 \}$. Recall, that an arithmetic progression is a set of the form $a\Z+b$ for some $a,b \in \R$ (we do not exclude the choice $a=0$). Finally, we note that all vector spaces are understood to be over the complex field.

\begin{theorem}\label{thm:main}
Let $\mathcal{F}$ be a vector space of entire functions satisfying $\mathcal{E}(\C^*) \subseteq \ft$. Further, let $\theta \in \R$, and let $\Lambda \subseteq \R$. Then $e^{i\theta}(\R + i \Lambda)$ is a uniqueness set for the phase retrieval problem in $\ft$ if and only if $\Lambda$ is not contained in an arithmetic progression.
\end{theorem}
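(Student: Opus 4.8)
My plan is to first normalise $\theta$ to $0$ and then treat the two implications separately. Put $\mathcal F_\theta \coloneqq \{\, z \mapsto F(e^{i\theta}z) : F \in \mathcal F \,\}$. Since $\xi \mapsto e^{i\theta}\xi$ permutes $\C^*$, the space $\mathcal F_\theta$ is again a vector space of entire functions containing $\mathcal E(\C^*)$, and $F \mapsto F(e^{i\theta}\,\cdot\,)$ is a bijection $\mathcal F \to \mathcal F_\theta$ preserving $\sim$ and, for $u = e^{i\theta}(x+i\lambda)$, the identity $|F(u)| = |F(e^{i\theta}\,\cdot\,)(x+i\lambda)|$. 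Hence $e^{i\theta}(\R+i\Lambda)$ is a uniqueness set for $\mathcal F$ if and only if $\R + i\Lambda$ is one for $\mathcal F_\theta$, so it suffices to prove the equivalence when $\theta = 0$, in which case $\R + i\Lambda = \bigcup_{\lambda\in\Lambda}(\R+i\lambda)$ is a union of horizontal lines. Throughout I write $G^\ast(z) \coloneqq \overline{G(\bar z)}$ for entire $G$, which is entire and satisfies $G^\ast(x+i\lambda) = \overline{G(x-i\lambda)}$.

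\emph{If $\Lambda$ lies in an arithmetic progression, then $\R+i\Lambda$ is not a uniqueness set.} Since enlarging $\Lambda$ only enlarges the sampling set, it is enough to produce a counterexample on $\R + i(a\Z+b)$. For $a\neq 0$ I would set $\mu = \pi/a$, $\phi = \pi b/a$ and take the exponential sums
\[
F(z) = e^{2\mu z} - i e^{i\phi} e^{\mu z}, \qquad H(z) = e^{2\mu z} + i e^{i\phi} e^{\mu z},
\]
which lie in the span of $\mathcal E(\C^*) \subseteq \mathcal F$ since their frequencies $\mu,2\mu$ are nonzero. On a line $\R+i(ak+b)$ one has $e^{\mu z} = s\,e^{i\phi}$ with $s = (-1)^k e^{\mu\re z}\in\R$, whence $|F(z)| = |e^{\mu z}|\,|e^{\mu z} - ie^{i\phi}| = |s|\,|s-i|$ and likewise $|H(z)| = |s|\,|s+i|$; as $|s-i| = |s+i|$ for real $s$, we get $|F| = |H|$ on the whole family of lines, while $F = \tau H$ would force $\tau = 1$ and $-i = i$, so $F\not\sim H$. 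The case $a=0$ (a single line $\R+ib$) is identical with $\mu=1$, $\phi=b$, and the empty case is trivial.

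\emph{If $\Lambda$ is not contained in an arithmetic progression, then $\R+i\Lambda$ is a uniqueness set.} Let $F,H\in\mathcal F$ with $|F|=|H|$ on $\R+i\Lambda$. If one of them is $\equiv 0$ then so is the other (a nonzero entire function cannot vanish on a line) and $F\sim H$; so assume $F,H\not\equiv 0$. For each $\lambda\in\Lambda$ the entire function $z\mapsto F(z)F^\ast(z-2i\lambda) - H(z)H^\ast(z-2i\lambda)$ vanishes on $\R+i\lambda$, since there it equals $|F|^2-|H|^2$; by the identity theorem it vanishes identically, and dividing by $H(z)H^\ast(z-2i\lambda)$ gives, with $g = F/H$ and $g^\ast = F^\ast/H^\ast = (F/H)^\ast$,
\[
g(z)\,g^\ast(z-2i\lambda) = 1 \qquad (z\in\C,\ \lambda\in\Lambda).
\]
Comparing this equation for two elements of $\Lambda$ shows that $g$ (like $g^\ast$) admits every element of $2i\langle\Lambda-\Lambda\rangle$ as a period. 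Now $\langle\Lambda-\Lambda\rangle = c\Z$ for some $c\ge 0$ precisely when $\Lambda$ lies in an arithmetic progression; since it does not, $\langle\Lambda-\Lambda\rangle$ is a subgroup of $\R$ that is not of the form $c\Z$, hence dense, so $g$ has a dense set of purely imaginary periods. Its pole set is invariant under these periods and discrete, hence empty, so $g$ is entire; then $t\mapsto g(z_0+it)$ is entire and constant on a dense subset of $\R$, hence constant, so $g$ depends on $\re z$ only, and the Cauchy--Riemann equations force $g$ to be constant. Writing $g\equiv\tau$, the displayed equation gives $\tau\bar\tau = 1$, i.e.\ $F = \tau H$ with $|\tau|=1$, so $F\sim H$.

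The step I expect to be the crux is the derivation of the functional equation $g(z)g^\ast(z-2i\lambda)=1$ and the recognition that all of its content is group-theoretic: $g$ being forced to be a unimodular constant is exactly the assertion that $\langle\Lambda-\Lambda\rangle$ is dense, which is precisely the negation of ``$\Lambda$ lies in an arithmetic progression''. On the construction side, the non-obvious point is that, to remain inside the span of $\mathcal E(\C^*)$, one should compose a Blaschke-type Möbius factor (reflecting a point across a line through the origin) with $e^{\mu z}$, and that the calibration $\mu=\pi/a$ is what simultaneously places $e^{\mu z}$ on that line over every line $\R+i(ak+b)$, so that a single pair $(F,H)$ handles the whole progression at once.
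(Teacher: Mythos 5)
Your proof is correct, and the sufficiency direction takes a genuinely different route from the paper's. The paper writes $F$ and $H$ via their Weierstrass factorizations, splits into the cases $Z(F)=Z(H)$ and $Z(F)\neq Z(H)$, and in each case extracts a $2i(\lambda-\lambda')$-periodic object (the exponent difference, resp.\ the zero-counting difference $m_F-m_H$, via Lemma~\ref{lma:periodicity}) whose orbit is shown to accumulate by a Weyl-equidistribution argument (Lemma~\ref{lma:lattice}). You instead package everything into the single meromorphic function $g=F/H$: the identity theorem applied to $F(z)F^{\ast}(z-2i\lambda)-H(z)H^{\ast}(z-2i\lambda)$ yields the functional equation $g(z)g^{\ast}(z-2i\lambda)=1$, and the conclusion follows from the elementary dichotomy that a subgroup of $\R$ is either cyclic or dense. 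This buys you a shorter argument with no Weierstrass factorization, no case distinction on zero sets, and no equidistribution, at the price of the (correctly executed) step that a meromorphic function with a dense group of purely imaginary periods is constant — your pole-set/continuity/Cauchy--Riemann chain does this, though one could also invoke discreteness of the period group of a nonconstant meromorphic function, or apply the identity theorem once $g$ is known to be constant on one vertical line. The underlying engine, namely the reflection $F^{\ast}(z-2i\lambda)=\overline{F(z)}$ on the line $\R+i\lambda$, is the same one behind the paper's Lemma~\ref{lma:periodicity}. Your necessity construction $e^{2\mu z}\mp ie^{i\phi}e^{\mu z}$ is a cosmetic variant of the paper's (which conjugates the coefficients of $e^{\pm\pi e^{-i\theta}z/a}$): both exploit that $e^{\mu z}$ lands on a fixed line through the origin on every line of the progression, and both verify $F\not\sim H$ via linear independence of distinct exponentials; your reduction of general $\theta$ to $\theta=0$ via $\mathcal F_\theta$ is likewise sound, since $\xi\mapsto e^{i\theta}\xi$ permutes $\C^*$.
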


For instance, if $a \neq 0$, and if $b \in \R$, then every set $\Lambda$ of the form
\begin{equation}\label{eq:perturbation}
    \Lambda = \{ an+b+p_n : n \in \Z \}, \quad p_n \neq 0, \quad \lim_{|n| \to \infty} p_n = 0,
\end{equation}
is not contained in an arithmetic progression. This set constitutes a perturbation of the arithmetic progression $a\Z+b$ with perturbation sequence $(p_n)_{n \in \Z}$. Theorem \ref{thm:main} thus shows, that if $\Lambda$ is an arbitrary perturbation of the form \eqref{eq:perturbation}, then $e^{i\theta}(\R + i \Lambda)$ is a uniqueness set for the holomorphic phase retrieval problem.

\subsection{Gabor phase retrieval from parallel lines}

The short-time Fourier transform (STFT) of $f \in \lt$ with respect to the window function $g \in \lt$ is defined by
$$
V_gf(x,\omega) = \int_\R f(t)\overline{g(t-x)} e^{-2\pi i \omega t} \, dt.
$$
For $\varphi(t)=e^{-\pi t^2}$ we define $\mathcal{G} \coloneqq V_\varphi$, and call $\mathcal{G}$ the Gabor transform. We say that $\mathcal{U} \subseteq \R^2$ \emph{is a uniqueness set for the Gabor phase retrieval problem in} $X \subseteq \lt$, if for every $f,h \in X$ we have
$$
|\mathcal{G}f(z)| = |\mathcal{G}h(z)|, \ z \in \mathcal{U} \implies f \sim h.
$$
Lately, there has been a surge of research dedicated to determining uniqueness sets $\mathcal U$ for the Gabor phase retrieval problem in different function spaces $X$ \cite{multiwindow, grohsliehr1}, and related regimes, such as finite-dimensional spaces \cite{Bojarovska2016}, and group theoretical settings \cite{fuhr2023phase,Bartusel2023}. The problem can be viewed as a special instance of a frame-theoretical formulation of phase retrieval, a topic extensively studied in references such as \cite{alharbi1, alharbi2, daub, bandeira_savingphase, Bodmann2015,CONCA2015346,alaifariGrohs,freeman2023stable}.

Alaifari and Wellershoff showed in \cite{alaifari2020phase}, that a set of the form $\mathcal{U} = R_\theta(\R \times (a\Z+b))$ cannot form a uniqueness set for the Gabor phase retrieval problem in $\lt$. Here, $R_\theta : \R^2 \to \R^2, \, (x,y) \mapsto (x\cos \theta - y \sin \theta, \, x \sin \theta + y \cos \theta)$, represents a rotation by an angle $\theta$. This result has been extended upon by Grohs and the author, encompassing arbitrary window functions \cite{grohsLiehr3}, and arbitrary dimensions \cite{grohsLiehr4}. The intimate relation between the Gabor transform of a function and elements in the Fock space, when combined with Theorem \ref{thm:main}, implies that the impossibility to achieve uniqueness from sets of the form $R_\theta(\R \times \Lambda)$ occurs precisely when $\Lambda$ takes the structure $a\Z +b$.

\begin{theorem}\label{thm:gabor_impl}
    Let $\theta \in \R$, and let $\Lambda \subseteq \R$. Then $R_\theta(\R \times \Lambda)$ is a uniqueness set for the Gabor phase retrieval problem in $\lt$ if and only if $\Lambda$ is not contained in an arithmetic progression.
\end{theorem}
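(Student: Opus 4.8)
The plan is to transfer the problem to the Fock space through the Bargmann transform and then invoke Theorem~\ref{thm:main}. Write $\mathcal F^2(\mathbb C)$ for the Fock space of entire functions that are square-integrable against the weight $e^{-\pi|z|^2}$, and let $B\colon\lt\to\mathcal F^2(\mathbb C)$ denote the Bargmann transform, which is a linear bijection. I would start from the classical identity relating the Gabor transform to $B$ (see, e.g., \cite{grohsliehr1}): there is a constant $c>0$ with
\begin{equation}\label{eq:gab-bargmann}
  |\mathcal G f(x,\omega)| = c\, e^{-\frac{\pi}{2}(x^2+\omega^2)}\,\bigl|Bf(x-i\omega)\bigr|, \qquad (x,\omega)\in\R^2,\ f\in\lt .
\end{equation}
Since the prefactor in \eqref{eq:gab-bargmann} is strictly positive and does not depend on $f$, it follows that for any $\mathcal U\subseteq\R^2$ and any $f,h\in\lt$ one has $|\mathcal Gf|=|\mathcal Gh|$ on $\mathcal U$ if and only if $|Bf|=|Bh|$ on $\Phi(\mathcal U)\subseteq\C$, where $\Phi(x,\omega)\coloneqq x-i\omega$.

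First I would compute the image of $\mathcal U = R_\theta(\R\times\Lambda)$ under $\Phi$. A short calculation gives $\Phi(R_\theta(x,\omega)) = e^{-i\theta}\,\Phi(x,\omega)$, hence $\Phi\bigl(R_\theta(\R\times\Lambda)\bigr) = e^{-i\theta}(\R - i\Lambda) = e^{i\theta'}(\R + i\Lambda')$ with $\theta'\coloneqq-\theta\in\R$ and $\Lambda'\coloneqq-\Lambda$. Moreover, because $B$ is a linear bijection, $f\sim h$ if and only if $Bf\sim Bh$, and as $f$ ranges over $\lt$ the function $Bf$ ranges over all of $\mathcal F^2(\mathbb C)$. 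Combining these facts with the previous paragraph would show that $R_\theta(\R\times\Lambda)$ is a uniqueness set for the Gabor phase retrieval problem in $\lt$ if and only if $e^{i\theta'}(\R + i\Lambda')$ is a uniqueness set for the phase retrieval problem in $\ft=\mathcal F^2(\mathbb C)$.

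Finally, I would check that $\mathcal F^2(\mathbb C)$ meets the hypotheses of Theorem~\ref{thm:main}: it is a complex vector space of entire functions, and for every $\xi\in\C$ the function $z\mapsto e^{\xi z}$ lies in $\mathcal F^2(\mathbb C)$, since $|e^{\xi z}|^2 e^{-\pi|z|^2} = e^{2\re(\xi z)-\pi|z|^2}$ decays like a Gaussian as $|z|\to\infty$ and is therefore integrable over $\C$; thus $\mathcal E(\C^*)\subseteq\mathcal F^2(\mathbb C)$. Applying Theorem~\ref{thm:main} with $\ft=\mathcal F^2(\mathbb C)$, angle $\theta'$, and set $\Lambda'$ then yields that $e^{i\theta'}(\R+i\Lambda')$ is a uniqueness set for the phase retrieval problem in $\mathcal F^2(\mathbb C)$ if and only if $\Lambda'=-\Lambda$ is not contained in an arithmetic progression. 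Since $-\Lambda\subseteq a\Z+b$ is equivalent to $\Lambda\subseteq(-a)\Z+(-b)$, being contained in an arithmetic progression is invariant under $\Lambda\mapsto-\Lambda$, and the claimed equivalence follows. The only point requiring genuine care is pinning down the precise form and sign conventions of the Gabor--Bargmann relation \eqref{eq:gab-bargmann}; beyond this bookkeeping I do not expect a real obstacle, as the substantive work is carried out by Theorem~\ref{thm:main}.
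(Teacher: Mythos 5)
Your proposal is correct and follows essentially the same route as the paper, which likewise deduces Theorem~\ref{thm:gabor_impl} from the Gabor--Bargmann identity of Lemma~\ref{lma:GBLrelation}, the invariance of $\sim$ under the unitary Bargmann transform, and an application of Theorem~\ref{thm:main} to the Fock space. Your sign bookkeeping ($\Phi(R_\theta(x,\omega))=e^{-i\theta}(x-i\omega)$, so the image set is $e^{-i\theta}(\R-i\Lambda)$) matches the paper's convention $|\mathcal Gf(x,-\omega)|=e^{-\frac{\pi}{2}|z|^2}|\mathcal Bf(z)|$ with $z=x+i\omega$, and your verification that $\mathcal E(\C^*)$ lies in the Fock space supplies the one hypothesis the paper only asserts.
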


\subsection{Intermediate spaces}

The findings in \cite{alaifari2020phase,grohsLiehr3,grohsLiehr4} demonstrate that achieving uniqueness for the Gabor phase retrieval problem in $\lt$ is not possible when using samples located on a lattice. A lattice refers to a set $\Gamma \subseteq \R^2$ that can be expressed as $\Gamma=A\Z^2$, where $A \in \mathrm{GL}_2(\R)$. However, it has been proven that lattice-uniqueness holds true in the spaces $L^2[0,c]$, where $0<c<\infty$, that is, functions in $\lt$ that vanish almost everywhere outside the compact interval $[0,c]$ \cite{grohsliehr1, wellershoff2022injectivity}. Note, that lattices are a specific type of uniformly discrete sets, which are sets $S \subseteq \R^n$ satisfying the condition
$$
\inf_{\substack{s,s' \in S \\ s \neq s'}} |s - s'| > 0,
$$
with $|\cdot|$ the Euclidean norm in $\R^n$. Uniformly discrete sets play an essential role in sampling theory. Considering the negative result on phaseless lattice sampling in $\lt$, and the positive result in $L^2[0,c]$, it is natural to inquire whether uniqueness can be achieved from uniformly discrete sets in intermediate spaces $X$, that is,
$$
L^2[0,c] \subsetneq X \subsetneq \lt, \quad c >0.
$$
Based on Theorem \ref{thm:main}, we demonstrate that this holds true for the space of square-integrable functions supported on the positive real axis $\R_+ \coloneqq [0,\infty)$.

\begin{theorem}\label{thm:main_laplace}
    Let $\Lambda \subseteq \R$ be not contained in an arithmetic progression, and let $U=\{ u_j : j \in \N \} \subseteq \R_+$ be a sequence satisfying
    \begin{equation}
        \sum_{\substack{j=1 \\ u_j \neq 0}}^\infty \frac{1}{u_j} = \infty.
    \end{equation}
    Then $-U \times \Lambda$ is a uniqueness set for the Gabor phase retrieval problem in $L^2(\R_+)$.
\end{theorem}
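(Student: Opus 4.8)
The plan is to transport the problem to the Bargmann side, where functions in $L^2(\R_+)$ have transforms that are \emph{bounded on a half-plane}; this boundedness is exactly what allows one to replace the sparse sampling in the translation variable by a full line, after which Theorem \ref{thm:gabor_impl} finishes the job.

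Concretely, I would begin from the identity
\[
\mathcal{G}f(x,\omega) = e^{-\pi x^2}\, F_f(x - i\omega), \qquad F_f(z) \coloneqq \int_\R f(t)\, e^{-\pi t^2 + 2\pi t z}\, dt,
\]
valid for every $f \in \lt$, with $F_f$ entire. For $f \in L^2(\R_+)$ the integral runs over $t \ge 0$, so $|e^{2\pi t z}| = e^{2\pi t\,\re z} \le 1$ whenever $\re z \le 0$; hence $F_f$ is bounded on the open left half-plane $\Pi \coloneqq \{z : \re z < 0\}$, with $\|F_f\|_{H^\infty(\Pi)} \le \|f\|_2\, \|e^{-\pi(\cdot)^2}\|_{L^2(\R_+)}$. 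Since the weight $e^{-\pi x^2}$ is independent of the function, the hypothesis $|\mathcal{G}f| = |\mathcal{G}h|$ on $-U \times \Lambda$ is equivalent to $|F_f(-u_j - i\lambda)| = |F_h(-u_j - i\lambda)|$ for all $j \in \N$ and all $\lambda \in \Lambda$.

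The heart of the argument is to upgrade this, for each fixed $\lambda$, to equality of moduli on the entire line, i.e.\ $|\mathcal{G}f(\cdot,\lambda)| = |\mathcal{G}h(\cdot,\lambda)|$ on all of $\R$. Writing $F^{\sharp}(z) \coloneqq \overline{F(\bar z)}$ and noting that $\overline{F_f(x - i\lambda)} = F_f^{\sharp}(x + i\lambda)$ for real $x$, I would consider the entire function
\[
\Delta_\lambda(z) \coloneqq F_f(z - i\lambda)\, F_f^{\sharp}(z + i\lambda) - F_h(z - i\lambda)\, F_h^{\sharp}(z + i\lambda),
\]
which on $\R$ equals $e^{2\pi x^2}\bigl(|\mathcal{G}f(x,\lambda)|^2 - |\mathcal{G}h(x,\lambda)|^2\bigr)$ and thus vanishes at every $x = -u_j$. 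Since $\Pi$ is invariant under $z \mapsto \bar z$ and under imaginary translations, each factor above is bounded on $\Pi$, so $\Delta_\lambda \in H^\infty(\Pi)$. Now suppose $\Delta_\lambda \not\equiv 0$. Being entire, its zero set has no finite accumulation point, so for every $R > 0$ only finitely many indices $j$ satisfy $u_j \le R$; in particular $u_j > 1$ for all but finitely many $j$. On the other hand, the zeros of the nonzero function $\Delta_\lambda \in H^\infty(\Pi)$ satisfy the Blaschke condition for the half-plane, whence $\sum_{u_j > 0} \frac{u_j}{1 + u_j^2} < \infty$; combining this with $\frac{u_j}{1+u_j^2} \ge \frac{1}{2u_j}$ for $u_j > 1$ forces $\sum_{u_j \neq 0} \frac{1}{u_j} < \infty$, contradicting the hypothesis. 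Hence $\Delta_\lambda \equiv 0$, and therefore $|\mathcal{G}f(x,\lambda)| = |\mathcal{G}h(x,\lambda)|$ for every $x \in \R$.

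Running the previous step over all $\lambda \in \Lambda$ yields $|\mathcal{G}f| = |\mathcal{G}h|$ on $\R \times \Lambda$. Since $\Lambda$ is not contained in an arithmetic progression, Theorem \ref{thm:gabor_impl} applied with $\theta = 0$ (so that $R_\theta$ is the identity) shows that $\R \times \Lambda$ is a uniqueness set for the Gabor phase retrieval problem in $\lt \supseteq L^2(\R_+)$; hence $f \sim h$, which is the claim. I expect the main obstacle to be precisely the upgrading step: one must hit on the right ``complexification'' of $x \mapsto |F_f(x - i\lambda)|^2$ — namely the product $F_f F_f^{\sharp}$ rather than the non-holomorphic $|F_f|^2$ — so that it becomes a genuine $H^\infty(\Pi)$ function, and then realise that the divergence hypothesis $\sum 1/u_j = \infty$ is exactly what is incompatible with the half-plane Blaschke condition, with the caveat that when $(u_j)$ accumulates at a finite point the Blaschke route degenerates and it is the entireness of $\Delta_\lambda$ that saves the argument; a single proof by contradiction, as above, covers both regimes. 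The remaining points (the $H^\infty(\Pi)$ bound on the products and the precise shape of the half-plane Blaschke condition) are routine.
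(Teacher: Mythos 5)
Your proof is correct, and it reaches the same destination as the paper by the same underlying mechanism: for $f\in L^2(\R_+)$ the relevant holomorphic object is bounded on a half-plane, so the divergence of $\sum 1/u_j$ is incompatible (via a Blaschke/P\'olya--Szeg\"o zero count) with the difference of squared moduli vanishing only on $-U$; this upgrades the sampling set to a full line in the first variable, after which the arithmetic-progression theorem finishes. The packaging, however, is genuinely different. The paper factors $\mathcal{G}f(x,-\omega)=\varphi(x)\,\mathcal{L}(f\varphi e^{2\pi i\omega(\cdot)})(-2\pi x)$, uses the Laplace convolution theorem to realize $|\mathcal{L}g|^2$ as the Laplace transform of the $L^1$ autocorrelation $g*\tilde g$, and then invokes the notion of a \emph{uniqueness set for the Laplace transform} (Corollary 3.2, itself a consequence of P\'olya--Szeg\"o); this detour buys the more general Theorem 3.5, in which $U$ may be any Laplace uniqueness set, not just one with divergent reciprocal sum. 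You instead complexify $x\mapsto|F_f(x-i\lambda)|^2$ directly as $F_f(z-i\lambda)F_f^{\sharp}(z+i\lambda)$ -- which is exactly the device of Remark 2.4 -- and apply the half-plane Blaschke condition to the entire function $\Delta_\lambda$, with the identity theorem covering the case where $(u_j)$ accumulates at a finite point. This is more self-contained (no convolution theorem, no $L^1$ detour) but less modular. Two minor points: your Blaschke constant $\sum u_j/(1+u_j^2)$ differs from the P\'olya--Szeg\"o form $\sum 1/u_j$ used in the paper, but since only finitely many $u_j$ lie in $(0,1]$ once $\Delta_\lambda\not\equiv 0$, the two are interchangeable here; and your final appeal to Theorem 1.2 with $\theta=0$ is equivalent to the paper's return to Theorem 1.1 on the Bargmann side. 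No gaps.
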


In fact, a more general version of the previous result is true: it is sufficient for $U$ to be a uniqueness set for the Laplace transform. For a detailed discussion of this generalization, we refer the reader to Section \ref{sec:proof_consequences}. It is important to note that in Theorem \ref{thm:main_laplace}, both $\Lambda$ and $U$ can be chosen to be uniformly discrete. Such a choice leads to a uniformly discrete uniqueness set for the Gabor phase retrieval problem in $L^2(\R_+)$. In view of the observation that perturbations of arithmetic progressions of the form \eqref{eq:perturbation} are not contained in an arithmetic progression, we obtain as a corollary, that a perturbation of a lattice forms a uniqueness set for the intermediate space $L^2(\R_+)$.

\begin{corollary}\label{cor:perturb}
    Let $(p_n)_{n \in \Z}$ be a sequence as in \eqref{eq:perturbation}, let $\tilde \Z \coloneqq \{ n +p_n : n \in \Z \}$, and let $\alpha,\beta > 0$. Then $\alpha \Z \times \beta \tilde \Z$ is a uniqueness set for the Gabor phase retrieval problem in $L^2(\R_+)$.
\end{corollary}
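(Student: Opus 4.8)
The plan is to obtain the corollary as a direct consequence of Theorem~\ref{thm:main_laplace}, by producing a subset of $\alpha\Z\times\beta\tilde\Z$ that already fulfils the hypotheses of that theorem and then using that any superset of a uniqueness set is again a uniqueness set.

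First I would check that $\Lambda\coloneqq\beta\tilde\Z$ is not contained in an arithmetic progression. Since $\beta\tilde\Z=\{\beta n+\beta p_n:n\in\Z\}$, this set is exactly of the form \eqref{eq:perturbation} with $a=\beta\neq0$, $b=0$, and perturbation sequence $(\beta p_n)_{n\in\Z}$: one has $\beta p_n\neq0$ for every $n$, and $\lim_{|n|\to\infty}\beta p_n=0$ because $\beta>0$ and $p_n\to0$. By the observation recorded right after Theorem~\ref{thm:main}, no set of this form lies in an arithmetic progression.

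Next I would fix the sampling set for the first coordinate. Put $U\coloneqq\{\alpha j:j\in\N\}\subseteq\R_+$. Then $\sum_{j\in\N}(\alpha j)^{-1}=\alpha^{-1}\sum_{j\in\N}j^{-1}=\infty$, so $U$ satisfies the divergence condition in Theorem~\ref{thm:main_laplace}. Applying that theorem with this $U$ and with $\Lambda=\beta\tilde\Z$ yields that $-U\times\beta\tilde\Z$ is a uniqueness set for the Gabor phase retrieval problem in $L^2(\R_+)$.

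Finally, $-U=\{-\alpha j:j\in\N\}\subseteq\alpha\Z$, hence $-U\times\beta\tilde\Z\subseteq\alpha\Z\times\beta\tilde\Z$. Because enlarging the sampling set can only make the defining implication of a uniqueness set easier to satisfy, $\alpha\Z\times\beta\tilde\Z$ is a uniqueness set as well, which proves the corollary. I do not anticipate any genuine obstacle: all the analytic content is already contained in Theorem~\ref{thm:main_laplace} and in the perturbation remark following Theorem~\ref{thm:main}, so what remains is only the elementary bookkeeping carried out above.
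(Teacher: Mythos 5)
Your argument is correct and is precisely the route the paper intends (it derives the corollary from Theorem \ref{thm:main_laplace} together with the observation that perturbed progressions of the form \eqref{eq:perturbation} are not contained in an arithmetic progression). The only content you add is the explicit bookkeeping — choosing $U=\alpha\N$, verifying the harmonic-type divergence, and passing to the superset $\alpha\Z\times\beta\tilde\Z$ — all of which is sound.
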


The recent work \cite{perturbations} also highlighted the benefits of employing perturbed lattices over regular lattices in the uniqueness problem in Gabor phase retrieval. A pivotal difference between Corollary \ref{cor:perturb} and the perturbation results in \cite{perturbations} is that $\alpha$ and $\beta$ are arbitrary positive constants. Consequently, there is no requirement for a density assumption on the unperturbed lattice $\alpha \Z \times \beta \Z$.

We remark that the Hardy space $\mathcal H^2(\R)$ is the image of $L^2(\R_+)$ under the Fourier transform. According to \cite[Equation (3.10)]{Groechenig}, we have $|\mathcal{G}f(x,\omega)| = |\mathcal{G}\hat f(\omega,-x)|$ for every $f \in \lt$ and every $(x,\omega) \in \R^2$, whereby $\hat f$ is the Fourier transform of $f$. It follows that the results above directly transfer to $\mathcal{H}^2(\R)$. For example, $\beta \tilde \Z \times \alpha \Z$ is a uniqueness set for the Gabor phase retrieval problem in $\mathcal{H}^2(\R)$, provided that $\tilde \Z$ is a perturbation of the integers, and $\alpha,\beta > 0$.

\subsection{Wright's conjecture and Pauli-type uniqueness}

Let $B(\lt)$ denote the collection of all bounded linear operators on $\lt$. Following \cite{Jaming2023}, we say that a family $\mathcal{T} \subseteq B(\lt)$ \emph{does phase retrieval}, if for every $f,h \in \lt$ it holds that
$$
|Tf|=|Th|, \quad T \in \mathcal{T} \implies f \sim h.
$$
The classical Pauli problem in quantum mechanics examines whether $\{ I, \mathfrak F \}$ does phase retrieval. Here, $I$ represents the identity operator and $\mathfrak F$ represents the Fourier transform, defined on $L^1(\R) \cap L^2(\R)$ by
$$
\mathfrak F f (\xi) = \int_\R f(t) e^{-2\pi i \xi t} \, dt,
$$
and extends to a unitary operator on $L^2(\R)$. It is well-established, that phase retrieval does not hold for this operator set, as explicit counterexamples can be found in \cite{corbett_hurst_1977}. Wright proposed a conjecture suggesting the existence of a unitary operator $U$ such that $\{ I, \mathfrak F, U \}$ does phase retrieval \cite{VOGT1978365}. This conjecture, known as Wright's conjecture, remains an open problem to this day.  An extension of Wright's conjecture concerns the existence of three self-adjoint operators $\{ T_1, T_2, T_3 \}$ that do phase retrieval. In the recent publication \cite{Jaming2023}, Jaming and Rathmair provided an affirmative answer to the latter problem, by demonstrating that the three Fourier multiplier operators
$$
f \mapsto \mathfrak F(m_j \mathfrak F^{-1} f), \quad j \in \{ 1,2,3 \},
$$
with $m_1(t)=e^{-\pi t^2}, \, m_2(t)=2\pi t e^{-\pi t^2}, \, m_3(t)= (1-2\pi t) e^{-\pi t^2},$ do phase retrieval \cite[Theorem 1.1]{Jaming2023}. Based on the findings of the present paper, we can readily construct alternative families of self-adjoint operators doing phase retrieval. Wright's conjecture for self-adjoint operators then follows from the fact that there exists a set of three points that is not contained in an arithmetic progression. Specifically, we have the following result.

\begin{theorem}\label{thm:wright}
    Let $\Lambda \subseteq \R$, let $\varphi(t)=e^{-\pi t^2}$, and let
    $$
    T_\lambda : \lt \to \lt, \quad T_\lambda f = \mathfrak F (\varphi(\cdot - \lambda)f), \quad \lambda \in \Lambda.
    $$
    Then $\{ T_\lambda : \lambda \in \Lambda \}$ does phase retrieval, if and only if $\Lambda$ is not contained in an arithmetic progression. In particular, for $\alpha<\beta<\gamma \in \R$, the system of three self-adjoint operators 
    $
    \{ T_{\alpha} \circ \mathfrak F^{-1},T_{\beta}\circ \mathfrak F^{-1},T_\gamma\circ \mathfrak F^{-1} \}
    $
    does phase retrieval if and only if
    $
    \frac{\beta - \alpha}{\gamma - \beta} \not \in \Q.
    $
\end{theorem}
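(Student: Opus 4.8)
The plan is to deduce Theorem~\ref{thm:wright} from Theorem~\ref{thm:gabor_impl} by identifying each operator $T_\lambda$ with a one-dimensional slice of the Gabor transform. Since $\varphi(t-\lambda)=e^{-\pi(t-\lambda)^2}$ is real-valued, for $f\in\lt$ --- for which $\varphi(\cdot-\lambda)f\in\lo\cap\lt$, so that $T_\lambda$ is a well-defined bounded operator with $\|T_\lambda f\|_2\leq\|\varphi\|_\infty\|f\|_2$ --- one has
\[
T_\lambda f(\omega)=\int_\R f(t)\,\overline{\varphi(t-\lambda)}\,e^{-2\pi i\omega t}\,dt=\mathcal{G}f(\lambda,\omega).
\]
Consequently, $|T_\lambda f|=|T_\lambda h|$ as elements of $\lt$ for every $\lambda\in\Lambda$ is equivalent to $|\mathcal{G}f|=|\mathcal{G}h|$ on $\Lambda\times\R$; the only subtlety is the passage from almost-everywhere equality in the $\omega$-variable (as in the definition of ``does phase retrieval'') to everywhere equality, which is handled by the continuity of $\mathcal{G}f$ and $\mathcal{G}h$. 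Hence $\{T_\lambda:\lambda\in\Lambda\}$ does phase retrieval if and only if $\Lambda\times\R$ is a uniqueness set for the Gabor phase retrieval problem in $\lt$. Writing $\Lambda\times\R=R_{-\pi/2}(\R\times\Lambda)$ and applying Theorem~\ref{thm:gabor_impl} with $\theta=-\pi/2$ then yields the first equivalence.

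For the ``in particular'' statement, I would first note that each $T_\lambda\circ\mathfrak F^{-1}$ is self-adjoint: it is the conjugate by the unitary operator $\mathfrak F$ of the multiplication operator $f\mapsto\varphi(\cdot-\lambda)f$, and the latter is self-adjoint because $\varphi(\cdot-\lambda)$ is real-valued. Next, since $\mathfrak F^{-1}$ is a linear bijection of $\lt$ and $u\sim v$ if and only if $\mathfrak F^{-1}u\sim\mathfrak F^{-1}v$, the substitution $f=\mathfrak F^{-1}u$ shows that $\{T_\lambda\circ\mathfrak F^{-1}:\lambda\in\Lambda\}$ does phase retrieval if and only if $\{T_\lambda:\lambda\in\Lambda\}$ does. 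By the first part, the three-operator family thus does phase retrieval precisely when $\{\alpha,\beta,\gamma\}$ is not contained in an arithmetic progression. Finally, I would translate this into the stated ratio criterion: if $\{\alpha,\beta,\gamma\}\subseteq a\Z+b$ with $a\neq0$, then $\beta-\alpha$ and $\gamma-\beta$ are integer multiples of $a$, whence $(\beta-\alpha)/(\gamma-\beta)\in\Q$ (the case $a=0$ being excluded by $\alpha<\beta<\gamma$); conversely, if $(\beta-\alpha)/(\gamma-\beta)=p/q$ with $p,q\in\N$, then $a\coloneqq(\beta-\alpha)/p=(\gamma-\beta)/q$ and $b\coloneqq\alpha$ exhibit $\{\alpha,\beta,\gamma\}$ as a subset of $a\Z+b$.

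I do not expect a genuine obstacle here: once Theorem~\ref{thm:gabor_impl} is available, the argument is a change of variables together with routine bookkeeping about the equivalence relation $\sim$ and about well-definedness and self-adjointness of the operators. The step deserving the most care is the identification $|T_\lambda f(\omega)|=|\mathcal{G}f(\lambda,\omega)|$ and its compatibility with the almost-everywhere formulation of ``does phase retrieval'', which is why I would spell out the continuity argument explicitly rather than leave it implicit.
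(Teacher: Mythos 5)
Your proof is correct and follows essentially the same route as the paper: both rest on the identification $T_\lambda f = \mathcal{G}f(\lambda,\cdot)$ and the invariance of $\sim$ under $\mathfrak F^{-1}$, the only cosmetic difference being that you invoke the packaged rotated-lines result (Theorem~\ref{thm:gabor_impl}) with $\theta=-\pi/2$, while the paper passes directly through the Bargmann transform and Theorem~\ref{thm:main} --- which is exactly how Theorem~\ref{thm:gabor_impl} is derived anyway. Your explicit verifications of self-adjointness, of the a.e.-versus-everywhere point, and of the equivalence between the three-point arithmetic-progression condition and $\frac{\beta-\alpha}{\gamma-\beta}\notin\Q$ are all sound and merely spell out details the paper leaves implicit.
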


The second statement in Theorem \ref{thm:wright} depends on the property that a holomorphic function is uniquely determined by its absolute values on three parallel lines, provided that their distances satisfy an irrationality condition. In the context of entire functions of finite order, this observation is due to Jaming \cite{jamingPerscomm}.

\section{Proof of Theorem \ref{thm:main}}

Let $x,y \in \R$ such that $y \neq 0$. Further, let $n \in \Z$ be the unique integer such that $x \in [n|y|, (n+1)|y|)$. We define the real number $\{ x \}_y$ by
$$
\{ x \}_y \coloneqq x-n|y| \in [0,|y|).
$$
Note that if $y=1$, then $\{ x \}_y = \{ x \}_1$ is simply the fractional part of $x$, given by $\{ x \}_1 = x - \lfloor x \rfloor$, where $\lfloor x \rfloor \coloneqq \max \{ n \in \Z : n \leq x\}$ denotes the integral part of $x$. Recall that if $x$ is an irrational number, then Weyl's theorem states that the set
$$
\left \{ \{ nx \}_1 : n \in \Z \right \}
$$
is uniformly distributed in $[0,1)$ \cite[Example 2.1]{KuipersNiederreiter}. For an extensive exposition on uniformly distributed sequences, we refer the reader to a book by Kuipers and Niederreiter \cite{KuipersNiederreiter}. Finally, we denote by $\#A$ the number of elements in a set $A$.

\begin{lemma}\label{lma:lattice}
    Let $D \subseteq \R$ with $D \neq \{0 \}$. If there exists a real number $x \in \R$ and an element $d' \in D \setminus \{ 0 \}$ such that the set
    $$
        S \coloneqq \bigcup_{d \in D } \left \{ \{ x+dn \}_{d'} : n \in \Z \right \}
    $$
    is finite, then there exists $a \in \R$ such that $D \subseteq a \Z$.
\end{lemma}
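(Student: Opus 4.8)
The plan is to fix a single $d \in D$, reduce everything to a one‑dimensional statement about the orbit $O_d \coloneqq \{\{x+dn\}_{d'} : n\in\Z\} \subseteq [0,|d'|)$, and then exploit the finiteness of $S = \bigcup_{d\in D} O_d$ in two ways: first to force every ratio $d/d'$ to be rational, and then to bound the reduced denominators of these ratios uniformly.

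First I would normalize. Writing $c \coloneqq |d'| > 0$ and $\alpha \coloneqq x/c$, the elementary identity $\{y\}_{d'} = c\,\{y/c\}_1$ gives
$$ O_d \;=\; c\cdot\bigl\{\, \{\alpha + (d/c)\,n\}_1 : n\in\Z \,\bigr\}. $$
The crux is then the following dichotomy for a fixed $d$: the set $O_d$ is finite if and only if $d/c\in\Q$, and in that case $\#O_d$ equals the reduced denominator of $d/c$. Indeed, if $O_d$ has $m$ elements then among the $m+1$ numbers $\{\alpha+(d/c)j\}_1$, $j=0,\dots,m$, two must agree, whence $(d/c)\,k\in\Z$ for some nonzero $k\in\Z$ and so $d/c\in\Q$; equivalently, if $d/c\notin\Q$ then $O_d$ is infinite — this is also immediate from Weyl's theorem, which even yields equidistribution of $O_d$ in $[0,c)$. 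Conversely, if $d/c = p/q$ in lowest terms, then $\gcd(p,q)=1$ makes $n\mapsto pn\bmod q$ run through all of $\Z/q\Z$, so $\{(d/c)n\}_1$ takes exactly the values $0,1/q,\dots,(q-1)/q$, and $O_d = \{\,c\,\{\alpha+j/q\}_1 : j=0,\dots,q-1\,\}$ has exactly $q$ elements (pairwise distinct, since $\{\alpha+j/q\}_1=\{\alpha+j'/q\}_1$ forces $q\mid(j-j')$).

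The rest is bookkeeping. Since $S$ is finite and each $O_d\subseteq S$, every $O_d$ is finite, so by the dichotomy every $d\in D$ satisfies $d/c = p_d/q_d$ in lowest terms, and moreover $q_d = \#O_d \le \#S$ for all $d$. Setting $N \coloneqq \#S$ and $Q \coloneqq \operatorname{lcm}(1,2,\dots,N)$, each $q_d$ divides $Q$, hence $d = p_d\,(Q/q_d)\cdot(c/Q)\in(c/Q)\Z$ for every $d\in D$, i.e.\ $D\subseteq a\Z$ with $a \coloneqq c/Q = |d'|/Q$ (which incidentally comes out nonzero). I expect the only genuinely nontrivial point to be the single‑orbit dichotomy, and even that dissolves into the short pigeonhole argument above, so I anticipate no real obstacle; what remains is merely to be careful with the degenerate case $d=0$ (there $q_d=1$ and $O_d$ is a single point) and with the distinctness of the $q$ values $\{\alpha+j/q\}_1$, $0\le j<q$.
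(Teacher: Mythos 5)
Your proof is correct and follows essentially the same route as the paper's: normalize by $|d'|$, deduce that each $d/|d'|$ is rational from the finiteness of its orbit, identify the orbit cardinality with the reduced denominator to bound all denominators by $\#S$, and clear denominators with a common multiple. The only cosmetic differences are that you replace the paper's appeal to Weyl's equidistribution theorem by an elementary pigeonhole argument for the rationality step, and take $\operatorname{lcm}(1,\dots,\#S)$ instead of the product of the occurring denominators.
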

\begin{proof}
    Let $x \in \R$ and $d' \in D \setminus \{ 0 \}$ be as in the hypothesis of the statement.
    We define for every $d \in D$ a corresponding set $S_d \subseteq \R$ via
    \begin{equation}
            S_d  \coloneqq \{ \{ x+dn \}_{d'} : n \in \Z \} = |d'| \left \{ \left \{ \frac{x}{|d'|} + \frac{d}{|d'|}n \right \}_1  : n \in \Z  \right \}.
    \end{equation}
    Let us denote the set on the right-hand side of the previous equation by $W_d$. Since $S_d \subseteq S$ and $S$ is finite, it follows that $\frac{d}{|d'|}$ is a rational number. Otherwise, $W_d$ would be uniformly distributed in $[0,1)$. Thus, $S_d$ would be an infinite set, contradicting the assumption of the statement. Consequently, we can write
\begin{equation}\label{eq:frac}
    \frac{d}{|d'|} = \ell_d + \frac{p_d}{q_d}, \quad \ell_d \in \Z, \quad p_d \in \N_0, \quad q_d \in \N,
\end{equation}
with $p_d,q_d$ co-prime and $p_d < q_d$ (for $p_d=0$ we set $q_d=1$). Using this notation, we can write the set $S_d$ as
$$
S_d  = |d'| \left \{ \left \{ \frac{x}{|d'|} + \frac{p_d}{q_d}n \right \}_1 : n \in \Z  \right \}.
$$
Since $p_d,q_d$ are co-prime, the set $S_d$ contains exactly $q_d$ elements. In combination with the property $\# S_d \leq \# S < \infty$ for every $d \in D$, it follows that the set $Q = \{ q_d : d \in D \}$ is a bounded set of integers. Hence, $Q$ is finite. Suppose that $\# Q = N$ with
$$
Q = \{ r_1, \dots, r_N \},
$$
where $r_1, \dots, r_N \in \Z$ are distinct. For $q \coloneqq \prod_{j=1}^N r_j$ and suitable $\tilde p_d \in \Z$ we have
$$
\frac{p_d}{q_d} = \frac{\tilde p_d}{q}.
$$
In view of equation \eqref{eq:frac}, it holds that
$$
\frac{d}{|d'|} = \ell_d + \frac{\tilde p_d}{q} \in \Z + \frac{1}{q}\Z \subseteq \frac{1}{q} \Z.
$$
Thus, $d \in \frac{|d'|}{q}\Z$ for every $d \in D$ which yields the desired inclusion $D \subseteq a\Z$ with $a=\frac{|d'|}{q}$.
\end{proof}

For an entire function $F : \C \to \C$ which does not vanish identically, we define the zero-counting function of $F$ by
    \begin{equation}
        m_F(z)= \begin{cases*}
      n, & if $F$ has a zero of order $n$ at $z$, \\
      0, & else.
    \end{cases*}
    \end{equation}
Further, let $Z(F)$ be the zero set of $F$, where the zeros are counted with multiplicities. Thus, if $H$ is a second entire function satisfying $Z(F)=Z(H)$, then $F,H$ have the same zeros with same multiplicities. Suppose that $\Gamma_1,\Gamma_2 \subseteq \C$ are the images of two curves in $\C$, and let the entire functions $F,H$ satisfy
\begin{equation}\label{eq:idd_gamm1_gamma2}
    |F(z)| = |H(z)|, \quad z \in \Gamma_1 \cup \Gamma_2.
\end{equation}
Previous articles have made noteworthy observations regarding the identity \eqref{eq:idd_gamm1_gamma2}, indicating periodicity properties of the zero counting functions of $F$ and $H$ \cite{perez_2021,Jaming,Wellershoff2022,Mallat2015}. For instance, it has been observed by Perez, that if $\Gamma_1$ and $\Gamma_2$ are two distinct circles centered at the origin, and if $F,H$ satisfy $\eqref{eq:idd_gamm1_gamma2}$, then the difference map $m_F - m_H$ exhibits a periodicity property \cite{perez_2021} (see also a recent contribution by Chalendar and Partington \cite{ChalendarPartington2024}). A similar feature is observed when $\Gamma_1$ and $\Gamma_2$ are parallel lines \cite{Mallat2015} or intersecting lines \cite{Jaming}. The periodicity induced by parallel lines was recently applied by Wellershoff in \cite{Wellershoff2022}, resulting in a sampled Gabor phase retrieval statement in the Paley-Wiener space. The structure of finite order entire functions with a given modulus on equidistant parallel lines was studied in \cite{lines}. 

For our purposes, we require the following statement.

\begin{lemma}\label{lma:periodicity}
    Let $X = \{ x,y \}, \, x,y \in \R, \, x \neq y$, and $A,F,H : \C \to \C$ three entire functions with $F \neq 0$ and $H \neq 0$. Then the following holds.
    \begin{enumerate}
        \item If $|e^{A(z)}| = 1$ for all $z \in \R + i X$, then $A$ is $2i(x-y)$-periodic.
        \item If $|F(z)|=|H(z)|$ for all $z \in \R + i X$, then $m_F - m_H$ is $2i(x-y)$-periodic.
    \end{enumerate}
\end{lemma}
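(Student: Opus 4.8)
The plan is to prove both parts by the same device: reflect across each of the two horizontal lines $\R+ix$ and $\R+iy$, and compose the two reflections to obtain a translation by $2i(x-y)$. It will be convenient to attach to any entire function $g$ the entire function $g^{\sharp}(z)\coloneqq\overline{g(\bar z)}$ (entire, being $\sum\overline{a_{n}}z^{n}$ when $g=\sum a_{n}z^{n}$); it satisfies $(g^{\sharp})^{\sharp}=g$, sends a zero of order $n$ at $\bar w$ to a zero of order $n$ at $w$ (so that $m_{g^{\sharp}}(w)=m_{g}(\bar w)$), and on the line $\R+ix$ satisfies $g^{\sharp}(z-2ix)=\overline{g(z)}$, since $\bar z+2ix=z$ there.

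For part (1) I would first note that $|e^{A(z)}|=1$ for $z\in\R+iX$ says precisely that $\re A$ vanishes on each of the two lines $\R+ix$ and $\R+iy$. Hence the entire function $z\mapsto A(z)+A^{\sharp}(z-2ix)$ takes the value $2\,\re A(z)=0$ at every $z\in\R+ix$, so it vanishes identically by the identity theorem; that is, $A(z)=-A^{\sharp}(z-2ix)$ for all $z$, and likewise $A(z)=-A^{\sharp}(z-2iy)$. Subtracting gives $A^{\sharp}(z-2ix)=A^{\sharp}(z-2iy)$, i.e.\ $A^{\sharp}$ is $2i(x-y)$-periodic; since the period $2i(x-y)$ is purely imaginary, the operation $\sharp$ preserves periodicity by it, so applying $\sharp$ once more shows that $A$ itself is $2i(x-y)$-periodic.

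For part (2) I would use that on $\R+ix$ one has $|F(z)|^{2}=F(z)\overline{F(z)}=F(z)F^{\sharp}(z-2ix)$ and the analogous identity for $H$; since $|F|=|H|$ there, the entire functions $z\mapsto F(z)F^{\sharp}(z-2ix)$ and $z\mapsto H(z)H^{\sharp}(z-2ix)$ agree on a line and hence on all of $\C$. As $F\neq 0\neq H$, none of the factors is identically zero, so comparing zero divisors — the divisor of a product being the sum of the divisors — yields $m_{F}(z)+m_{F^{\sharp}}(z-2ix)=m_{H}(z)+m_{H^{\sharp}}(z-2ix)$ for every $z$. Writing $\phi\coloneqq m_{F}-m_{H}$ and using $m_{g^{\sharp}}(w)=m_{g}(\bar w)$ together with $\overline{z-2ix}=\bar z+2ix$, this reads $\phi(z)=-\phi(\bar z+2ix)$; the line $\R+iy$ gives $\phi(z)=-\phi(\bar z+2iy)$; equating the right-hand sides and setting $w=\bar z+2iy$ produces $\phi(w+2i(x-y))=\phi(w)$, which is the assertion.

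The one place where care is essential is part (1): one must \emph{not} argue via the Schwarz reflection principle applied to $e^{A}$ and then pass to a logarithm, since that route delivers only $A(z)+A^{\sharp}(z-2ix)\in 2\pi i\Z$ and hence periodicity of $A$ merely \emph{modulo} $2\pi i$ — the branch information is lost in the logarithm. Working directly from $\re A\equiv 0$ on the lines keeps the additive constant equal to $0$, and this is exactly what makes the stronger conclusion (periodicity of $A$ itself, not only of $e^{A}$) hold. The remaining subtleties are routine: checking that each reflected object is genuinely an entire function — which is the reason for packaging the reflection as $g^{\sharp}$ and invoking the identity theorem across a full line — and verifying that the divisor computation in part (2) is legitimate, which it is precisely because no function involved vanishes identically.
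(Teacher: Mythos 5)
Your proof is correct. Part (1) is in substance the same argument as the paper's: the paper also reduces $|e^{A}|=1$ to $\re A=0$ on the two lines and reflects across each of them via the Schwarz reflection principle (applied to the real-valued entire functions $z\mapsto iA(z+ix)$ and $z\mapsto iA(z+iy)$), then composes the two reflections into the translation by $2i(x-y)$; your $g^{\sharp}(z)=\overline{g(\bar z)}$ is just an explicit packaging of that reflection, and your cautionary remark about not passing through $e^{A}$ and a logarithm is well taken but does not mark a divergence, since the paper's route also works with $A$ directly and never takes a logarithm. The genuine difference is in part (2): the paper simply cites \cite[Lemma 2]{Wellershoff2022}, whereas you supply a complete self-contained proof by comparing the zero divisors of the entire functions $F(z)F^{\sharp}(z-2ix)$ and $H(z)H^{\sharp}(z-2ix)$, which agree on a line and hence everywhere; the bookkeeping $\phi(z)=-\phi(\bar z+2ix)=-\phi(\bar z+2iy)$ and the substitution $w=\bar z+2iy$ are all correct (the hypothesis $F\neq 0\neq H$ is used exactly where you say, to make the divisor of the product the sum of the divisors). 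What your version buys is that the lemma becomes self-contained; what the paper's version buys is brevity by outsourcing the zero-set periodicity to the literature.
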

\begin{proof}
    To prove the first claim, we observe that the condition $|e^{A(z)}| = 1$ for all $z \in \R + i X$ implies that
    $$
    \re A(z) = 0, \quad z \in \R + i X.
    $$
    Hence, the two functions
    $$
    z \mapsto iA(z+ix), \quad  z \mapsto iA(z+iy)
    $$
    are entire and real-valued on $\R$. It follows from the Schwarz reflection principle that
    $$
    iA(\overline{z}+ix) = \overline{iA(z+ix)}, \quad iA(\overline{z}+iy) = \overline{iA(z+iy)}, \quad z \in \C.
    $$
    Consequently,
    $$
    A(z) = A(z+2i(x-y)), \quad z \in \C.
    $$
    This proves the first claim. The second statement follows from \cite[Lemma 2]{Wellershoff2022}.
\end{proof}

\begin{proof}[Proof of Theorem \ref{thm:main}]
\textbf{Step 1: necessity.} We show that if $\Lambda$ is contained in an arithmetic progression, then $e^{i\theta}(\R + i \Lambda)$ is not a uniqueness set for the phase retrieval problem in $\ft$. To this end, suppose that $\Lambda \subseteq a\Z+b$. We can assume that $a \neq 0$: for $a=0$ it holds that $\Lambda = \{ b \} \subseteq b\Z$. Hence, $\Lambda \subseteq v\Z$ for some $v>0$ (if $b \neq 0$ then choose $v=b$; if $b=0$ then choose an arbitrary $v \neq 0$).

It suffices to show the existence of $F,H \in \ft$ satisfying both $F \not \sim H$ and $|F(z)| = |H(z)|$ for all $z \in e^{i\theta}(\R + i \Lambda)$. To do so, let $\mathbf{c} = (c,c') \in \C^2$, and define
\begin{equation}
    \begin{split}
        F_\mathbf{c}(z) &= c \exp \left ( \frac{\pi b}{a} i \right ) \exp \left ( -\frac{\pi e^{-i\theta}}{a} z \right ) + c' \exp \left ( -\frac{\pi b}{a} i \right ) \exp \left ( \frac{\pi e^{-i\theta}}{a} z \right ), \\
        H_\mathbf{c}(z) &= \overline{c} \exp \left ( \frac{\pi b}{a} i \right ) \exp \left ( -\frac{\pi e^{-i\theta}}{a} z \right ) + \overline{c'} \exp \left ( -\frac{\pi b}{a} i \right ) \exp \left ( \frac{\pi e^{-i\theta}}{a} z \right ).
    \end{split}
\end{equation}
Since $\ft$ is a complex vector space containing $\mathcal{E}(\C^*)$, it follows that $F_\mathbf{c},H_\mathbf{c} \in \ft$. For $n \in \Z$ and $x \in \R$, consider the complex number
\begin{equation}\label{zxn}
    z = e^{i\theta} (x+i(an+b)) \in e^{i\theta}(\R+i\Lambda).
\end{equation}
For such $z$, it holds that
\begin{equation}
    \begin{split}
        F_\mathbf{c}(z) &= c \exp \left ( -\frac{\pi x}{a} - i n \pi \right ) + c' \exp \left ( \frac{\pi x}{a} + i n \pi \right ), \\
        H_\mathbf{c}(z) &= \overline{c} \exp \left ( -\frac{\pi x}{a} - i n \pi \right ) + \overline{c'} \exp \left ( \frac{\pi x}{a} + i n \pi \right ).
    \end{split}
\end{equation}
Therefore, for $z$ of the form \eqref{zxn}, we have $|F_\mathbf{c}(z)|=|H_\mathbf{c}(z)|$, where we used that $e^{i\pi n} = e^{-i\pi n}$ for $n \in \Z$. Since $x \in \R$ and $n \in \Z$ were arbitrary, it follows that 
\begin{equation}\label{eq:id}
    |F_\mathbf{c}(z)| = |H_\mathbf{c}(z)|, \quad z \in e^{i\theta}(\R + i \Lambda).
\end{equation}
Now suppose that $F_\mathbf{c} \sim H_\mathbf{c}$ and let $\nu \in \C$, $|\nu|=1$, be such that $F_\mathbf{c} = \nu H_\mathbf{c}$. Since $F_\mathbf{c},H_\mathbf{c}$ arise as linear combinations of elements in $\mathcal{E}(\C^*)$, and since $\mathcal{E}(\C^*)$ forms a finitely linearly independent system of functions \cite[Chapter VI, Theorem 4.1]{lang2005algebra}, we obtain the relation
\begin{equation}\label{eq:nu_system}
    \begin{pmatrix} c \\ c' \end{pmatrix} = \nu \begin{pmatrix} \overline{c} \\ \overline{c'} \end{pmatrix}.
\end{equation}
It is easy to see, that a suitable choice of $(c,c')$ renders the system \eqref{eq:nu_system} unsolvable for all $\nu \in \C$ (for example pick $c=1$ and $c'=i$). Consequently, for such $(c,c')$, the assumption $F_\mathbf{c} \sim H_\mathbf{c}$ leads to a contradiction, and it must hold that $F_\mathbf{c} \not \sim H_\mathbf{c}$. At the same time, Equation \eqref{eq:id} holds true for every choice of $(c,c')$. We therefore proved the existence of two non-equivalent functions in $\ft$ with the same modulus on $e^{i\theta}(\R + i \Lambda)$.

\textbf{Step 2: sufficiency.}
Let $\Lambda$ be not contained in an arithmetic progression. In order to conclude that $e^{i\theta}(\R + i \Lambda)$ is a uniqueness set for the phase retrieval problem in $\ft$, we need to show that if $F,H \in \ft$ satisfy
\begin{equation}
    |F(z)| = |H(z)|, \quad z \in e^{i\theta}(\R + i \Lambda),
\end{equation}
then $F \sim H$. Replacing $F$ and $H$ by $F(e^{i\theta} \cdot)$ and $H(e^{i\theta} \cdot)$, respectively, we may assume without loss of generality that $\theta = 0$. Therefore, we suppose in the following, that $F$ and $H$ have the same absolute value on $\R +i\Lambda$. Finally, we assume that both $F$ and $H$ do not vanish identically. If $F$ were the zero function, then by the assumption that $F$ and $H$ have equal modulus on $\mathbb{R} + i\Lambda$, it would follow that $H$ is also the zero function, and vice versa. If $F=H=0$, then it is evident that $F \sim H$.

Note that the assumption on $\Lambda$ not being contained in an arithmetic progression, implies that $\#\Lambda \geq 3$. Pick an arbitrary $\lambda_0 \in \Lambda$ and define
$$
D \coloneqq \{ \lambda_0 - \lambda : \lambda \in \Lambda \} = \lambda_0 - \Lambda.
$$
The fact that $\Lambda$ is not contained in an arithmetic progression implies that $D$ is not contained in any set of the form $a\Z, \, a \in \R$. Further, $\#D \geq 2$ and thus there exists $d' \in D$ satisfying $d' \neq 0$. Fix such an element $d'$ and define for every $d \in D$ and $x \in \R$, a corresponding set $S_d(x)$ via
$$
S_d(x) \coloneqq \{ \{ x+2 d n \}_{2d'} : n \in \Z \}.
$$
Further, we define
$$
S(x) \coloneqq \bigcup_{d \in D} S_d(x).
$$
Observe that the set $S(x)$ satisfies $S(x) \subseteq [0,2|d'|)$. According to Lemma \ref{lma:lattice}, the set $S(x)$ is infinite. Therefore, there exists an accumulation point $\mathfrak a \in \R$ of $S(x)$.

Finally, let
\begin{equation}
        F(z) = z^m e^{f(z)} P(z), \quad
        H(z) = z^\ell e^{h(z)} Q(z),
\end{equation}
be the Weierstrass factorization of $F,H$ with $P,Q$ denoting the products of elementary factors corresponding to the zeros of $F$ and $H$, respectively. Moreover, $f,h$ are entire functions and $m,\ell \in \N_0$.

\textbf{Case 2.1: $Z(F)=Z(H)$.} Let us assume that $Z(F)=Z(H)$. In this case, we have $m = \ell$ and can assume that $P=Q$. Therefore, with $A\coloneqq f-h$, we obtain the relation
$$
|e^{A(z)}| = 1, \quad z \in \R + i\Lambda.
$$
In view of Lemma \ref{lma:periodicity}(1) and the notation above, it follows that
$$
A(z) = A(z+2id), \quad z \in \C, \quad d \in D.
$$
Thus,
$$
A(0) = A(iz), \quad z \in  S(0).
$$
The existence of an accumulation point $\mathfrak a$ of $S(0)$, in combination with the identity theorem for holomorphic functions, shows that $A$ is a constant function. Since $z \mapsto \re( A(z))$ vanishes on $\R + i\Lambda$, it follow that $A(z) = i\zeta$ for some constant $\zeta \in \R$. This yields the assertion.

\textbf{Case 2.2: $Z(F) \neq Z(H)$.} We show that $Z(F) \neq Z(H)$ contradicts the assumption that both $F$ and $H$ do not vanish identically. To do so, we observe that if $Z(F) \neq Z(H)$, then there exists a complex number $z_0 = x_0 + i y_0 \in \C$ such that $m_F(z_0) \neq m_H(z_0)$. Without loss of generality, we may assume that
\begin{equation}\label{eq:neq}
    0 < m_F(z_0) - m_H(z_0).
\end{equation}
In a similar fashion as in the previous step, we use an induced periodicity, this time of the difference map $m_F - m_H$: Lemma \ref{lma:periodicity}(2) implies that
$$
0 < m_F(z_0) - m_H(z_0) = m_F(x_0 + iz) - m_H(x_0 + iz), \quad z \in  S(y_0).
$$
Let $\mathfrak a$ be an accumulation point of $S(y_0)$, and $\{ s_k : k\in \N \} \subseteq S(y_0)$ a sequence of distinct elements such that $s_k \to \mathfrak a$. Then for every $k \in \N$ it holds that $m_F(x_0 + is_k) > m_H(x_0 + is_k) \geq 0$. In particular, every $x_0 + is_k$ is a zero of $F$. The identity theorem for holomorphic functions implies that $F$ vanishes identically. From the assumption that $F$ and $H$ have the same absolute value on $\R +i\Lambda$, it follows that also $H$ vanishes identically. This yields the desired contradiction. The proof is complete.
\end{proof}

\begin{remark}\label{rem:acc_point}
Observe that if $F$ is an entire function, then the map
$
z \mapsto |F(z)|^2 = F(z)\overline{F(z)}
$
extends from $\R$ to an entire function. Hence, for an equality of the form
$$
    |F(z)| = |H(z)|, \quad z \in \R + i \Lambda
$$
to hold, it suffices that 
$$
    |F(z)| = |H(z)|, \quad z \in \Gamma + i \Lambda,
$$
where $\Gamma$ is any set of uniqueness for entire functions. For instance, a set which contains an accumulation point does the job.
\end{remark}

\begin{remark}
It is evident, that if $\mathcal{U}$ is a uniqueness set for the phase retrieval problem in $\ft$, and if $\ft' \subseteq \ft$, then $\mathcal{U}$ is a uniqueness set for the phase retrieval problem in $\ft'$. If $\mathcal{O}$ denotes the collection of all entire functions, then $\mathcal{O}$ satisfies the assumptions of Theorem \ref{thm:main}. Consequently, if $\Lambda \subseteq \R$ is not contained in an arithmetic progression, and if $\theta \in \R$, then $e^{i\theta}(\R + i \Lambda)$ is a uniqueness set for the phase retrieval problem in $\ft'$, where $\ft'$ is an arbitrary set of entire functions.
\end{remark}

\section{Proof of the consequences of Theorem \ref{thm:main}}\label{sec:proof_consequences}

Let $\mathbb H = \{ z \in \C : \re z > 0 \}$ denote the open right half-plane, and $\overline{\mathbb H} = \{ z \in \C : \re z \geq 0 \}$ the closed right half-plane. The Laplace transform of a function $f \in L^1(\R_+)$ at $s \in \overline{\mathbb H}$ is given by
$$
\mathcal L f(s) \coloneqq \int_{\R_+}f(y)e^{-sy} \, dy.
$$
The function $\mathcal Lf$ defines a holomorphic function on $\mathbb H$. Recall that the Laplace transform enjoys the following convolution theorem: if $f,h \in L^1(\R_{+})$, and $s \in \mathbb{H}$, then
    $
    \mathcal L(f*h)(s) = \mathcal L f(s)\mathcal L h(s),
    $
 where $f * h \in L^1(\R_+)$ denotes the convolution of $f$ and $h$,
 $$
f*h(x)  =\int_0^x f(y)h(x-y) \, dy.
$$
We say that $U  \subseteq \R_+$ is a uniqueness set for the Laplace transform if a function $f \in L^1(\R_+)$ vanishes identically, provided that $\mathcal L f$ vanishes on $U$. The classical uniqueness theorem for the Laplace transform asserts that $\R_+$ is a uniqueness set for the Laplace transform. A theorem due to Lerch states that every set of the form $a\N +b$ with $a,b >0$ is a uniqueness set for the Laplace transform \cite{Lerch}. In fact, Lerch's theorem is a consequence of a more general principle related to zeros of analytic functions which are bounded on $\mathbb H$. It goes back to Pólya and Szegö \cite[Problem 298]{polya}. A proof can be found in a classical paper by Titchmarsh on zeros of integral transforms \cite[Theorem II]{Titchmarsh}.

\begin{theorem}[Pólya-Szegö]\label{thm:ps}
    Suppose that $F : \mathbb H \to \C$, $F \neq 0$, is holomorphic, and has zeros $z_1,z_2, \dots $ with $z_j = r_j e^{i \theta_j}$. If $F$ is bounded, then
    $$
    \sum_{j=1}^\infty \frac{\cos\theta_j}{r_j} < \infty.
    $$
\end{theorem}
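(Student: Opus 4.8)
The plan is to transplant the problem to the unit disk, where the statement ``a bounded holomorphic function satisfies the Blaschke condition on its zeros'' is classical, and then pull the resulting estimate back to the half-plane. Concretely, let $\varphi(w) = \frac{1+w}{1-w}$, which maps the unit disk $\mathbb D$ conformally onto $\mathbb H$ (with $\varphi(0)=1$ and $\varphi(\partial \mathbb D) = i\R \cup \{\infty\}$), and whose inverse is $\psi(z) = \frac{z-1}{z+1}$. Then $G \coloneqq F \circ \varphi$ is holomorphic and bounded on $\mathbb D$ with $G \not\equiv 0$, and its zero set is exactly $\{ w_j \}$, where $w_j = \psi(z_j) = \frac{z_j - 1}{z_j + 1}$, counted with multiplicity. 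The task then splits into two pieces: proving the Blaschke condition $\sum_j (1 - |w_j|) < \infty$ for $G$, and translating it into the claimed series in the $z_j$.

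For the Blaschke condition I would argue in the standard way via Jensen's formula. After dividing $G$ by a suitable power of $w$ we may assume $G(0) \neq 0$, which only discards one zero. For $0 < r < 1$ that is not the modulus of any zero, Jensen's formula on $|w| \le r$ gives
\[
\sum_{|w_j| < r} \log \frac{r}{|w_j|} \;=\; \frac{1}{2\pi}\int_0^{2\pi} \log |G(re^{it})|\,dt \;-\; \log|G(0)| \;\le\; \log \lVert G\rVert_\infty - \log |G(0)|,
\]
a bound independent of $r$. Letting $r \to 1$ along admissible radii yields $\sum_j \log \tfrac{1}{|w_j|} < \infty$, and hence $\sum_j (1 - |w_j|) < \infty$, since $\log(1/t) \asymp 1-t$ as $t \to 1^-$ (the finitely many small $|w_j|$ are irrelevant for convergence).

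It remains to rewrite this in terms of $z_j = r_j e^{i\theta_j}$. A direct computation gives $1 - |w_j|^2 = \frac{4\,\re z_j}{|z_j + 1|^2}$, so from $1 - |w_j| \ge \tfrac12(1 - |w_j|^2)$ we obtain $\sum_j \frac{\re z_j}{|z_j + 1|^2} < \infty$. Since $\re z_j \ge 0$, one has $|z_j + 1|^2 = |z_j|^2 + 2\re z_j + 1 \asymp 1 + |z_j|^2$ with absolute constants, and $\frac{\cos\theta_j}{r_j} = \frac{\re z_j}{|z_j|^2}$; the equivalence $\frac{\re z_j}{|z_j|^2} \asymp \frac{\re z_j}{1 + |z_j|^2}$ holds once the zeros are bounded away from the origin, which is the regime relevant to the applications (it is automatic, for instance, when $F$ extends continuously to $0$ with $F(0) \neq 0$, as for Laplace transforms of functions in $L^1(\R_+)$). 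This yields $\sum_j \frac{\cos\theta_j}{r_j} < \infty$.

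The one genuinely delicate point is precisely this final translation near the finite boundary point $0$: the Blaschke condition only controls $\sum_j \re z_j /(1 + |z_j|^2)$, which is weaker than $\sum_j \cos\theta_j / r_j$ if zeros cluster at the origin, so the stated form of the conclusion tacitly uses that they do not. Everything else --- the conformal change of variables, Jensen's formula, and the algebra --- is routine. An alternative that avoids $\varphi$ is to apply Jensen's formula directly on an exhaustion of $\mathbb H$ by disks centered at $1$ (assuming $F(1)\neq 0$), but it requires the same care at the boundary and is no shorter.
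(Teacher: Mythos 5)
The paper does not actually prove this theorem --- it only cites Pólya--Szegő (Problem 298) and Titchmarsh (Theorem II) --- so the relevant comparison is with the classical argument behind those citations, which is essentially the route you take: transplant to the disk via $z \mapsto \frac{z-1}{z+1}$, get the Blaschke condition from Jensen's formula, and translate back. Your execution of those two steps is correct: Jensen's formula does yield $\sum_j (1-|w_j|) < \infty$, and the identity $1-|w_j|^2 = 4\,\re z_j/|z_j+1|^2$ converts this into $\sum_j \re z_j/(1+|z_j|^2) < \infty$.

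The caveat you raise at the end is not cosmetic: the theorem as stated is in fact \emph{false} without an assumption keeping the zeros away from the origin, so your proof cannot be completed as a proof of the literal statement --- the statement itself needs amending. Concretely, take $z_j = 1/j^2$. Then $w_j = \psi(z_j) = \frac{1-j^2}{1+j^2}$ satisfies $1-|w_j| = \frac{2}{j^2+1}$, so the Blaschke condition holds; the Blaschke product $B$ on the disk with zero set $\{w_j\}$ is a nonzero bounded holomorphic function, and $F = B \circ \psi$ is bounded and holomorphic on $\mathbb H$ with zeros exactly at $1/j^2$, yet $\sum_j \cos\theta_j/r_j = \sum_j j^2 = \infty$. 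The discrepancy with the cited sources is that there the functions are analytic on a neighbourhood of the closed half-plane (in Titchmarsh's paper they are entire), so zeros cannot accumulate at $0$ and the two sums $\sum_j \re z_j/|z_j|^2$ and $\sum_j \re z_j/(1+|z_j|^2)$ are comparable. So your argument is the right proof of the correct statement --- either the unconditional conclusion $\sum_j \re z_j/(1+|z_j|^2) < \infty$, or the stated conclusion under the additional hypothesis $\inf_j |z_j| > 0$ --- and the fix belongs in the theorem, not in your proof. One warning about your parenthetical remark: for the paper's Corollary 3.3 the condition $F(0) \neq 0$ is \emph{not} automatic for Laplace transforms of $L^1(\R_+)$ functions (one can have $\int f = 0$), so the zeros $u_j$ there may in principle cluster at the origin; in that regime the hypothesis $\sum 1/u_j = \infty$ is compatible with the Blaschke condition (again $u_j = 1/j^2$), and that case requires a separate treatment.
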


A theorem on uniqueness for the Laplace transform can be deduced directly from Theorem \ref{thm:ps}. This is the content of the next statement.

\begin{corollary}\label{cor:laplace_polya}
    Suppose that $U = (u_j)_{j \in \N} \subseteq \R_+$ is a sequence such that
    \begin{equation}\label{eq:divergence_assumption}
        \sum_{\substack{j=1 \\ u_j \neq 0}}^\infty \frac{1}{u_j} = \infty.
    \end{equation}
    Then $U$ is a uniqueness set for the Laplace transform. 
\end{corollary}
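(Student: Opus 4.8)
The plan is to obtain the corollary as an immediate consequence of the Pólya--Szegő theorem (Theorem~\ref{thm:ps}), exploiting the fact that the Laplace transform of an $L^1(\R_+)$ function is a bounded holomorphic function on the open right half-plane $\mathbb H$, so that its zeros cannot be too dense near the positive real axis.

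Concretely, suppose $f \in L^1(\R_+)$ and $F \coloneqq \mathcal L f$ vanishes on $U$; the goal is to show $f = 0$. First I would record two basic facts about $F$: it is holomorphic on $\mathbb H$, and it is bounded there, since for $\re s > 0$,
$$
|F(s)| \le \int_{\R_+} |f(y)|\, e^{-(\re s)\, y}\, dy \le \|f\|_{L^1(\R_+)}.
$$
If $F \equiv 0$ on $\mathbb H$, then $f = 0$ by the classical uniqueness theorem for the Laplace transform, and we are finished; so I would assume $F \not\equiv 0$ and aim for a contradiction. Writing the zeros of $F$ as $z_k = r_k e^{i\theta_k}$, Theorem~\ref{thm:ps} gives $\sum_k \cos\theta_k / r_k < \infty$. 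Now observe that every $u_j$ with $u_j \neq 0$ is a positive real number, hence lies in $\mathbb H$, and is by hypothesis a zero of $F$; such a zero has argument $0$ and modulus $u_j$. Since all $z_k$ lie in $\mathbb H$, each $\theta_k \in (-\pi/2, \pi/2)$ and every term $\cos\theta_k / r_k$ is strictly positive, so restricting the convergent sum to the subfamily of (distinct) zeros $u_j \neq 0$ yields
$$
\sum_{\substack{j \in \N\\ u_j \neq 0}} \frac{1}{u_j} \le \sum_k \frac{\cos\theta_k}{r_k} < \infty,
$$
contradicting the divergence assumption~\eqref{eq:divergence_assumption}. Hence $F \equiv 0$, and $f = 0$.

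I do not anticipate a real obstacle: the corollary is little more than a reformulation of Theorem~\ref{thm:ps}. The only steps that call for a line of justification are the $L^1$-bound that places $F$ within the scope of Pólya--Szegő, the positivity of the series terms (needed to pass to a subseries without worrying about signs), and the harmless observation that \eqref{eq:divergence_assumption} already forces infinitely many $u_j$ to be nonzero, so there is no degenerate case to treat separately.
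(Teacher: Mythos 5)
Your proposal is correct and follows essentially the same route as the paper: bound $\mathcal Lf$ by $\|f\|_{L^1(\R_+)}$ on $\mathbb H$, assume $\mathcal Lf\not\equiv 0$, and derive a contradiction between the divergence assumption \eqref{eq:divergence_assumption} and the Pólya--Szegő bound applied to the zeros $u_j$ on the positive real axis. Your write-up merely makes explicit the positivity of the terms $\cos\theta_k/r_k$ and the exclusion of $u_j=0$, which the paper leaves implicit.
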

\begin{proof}
    Let $f \in L^1(\R_+)$ such that $\mathcal{L}f(u_j)=0$ for every $j \in \N$. We need to show that $f=0$. To do so, we observe that for every $z \in \mathbb H$ one has
    $$
    |\mathcal{L}f(z)| \leq \| f \|_{L^1(\R_+)}.
    $$
    Hence, if $f$ would not be the zero function, then by the classical uniqueness theorem for the Laplace transform, $\mathcal{L}f$ would be a holomorphic function on $\mathbb H$ which does not vanish identically. However, the divergence assumption \eqref{eq:divergence_assumption} then leads to a contradiction to Theorem \ref{thm:ps}. Thus, $f=0$.
\end{proof}

The Bargmann transform of $f \in \lt$ is the entire function $\mathcal{B}f$, defined by
$$
\mathcal{B}f(z) = \int_\R f(t) e^{2\pi t z - \pi t^2 - \frac{\pi}{2}z^2} \, dt.
$$
The operator $\mathcal{B}$ is a unitary map from $\lt$ onto the Bargmann-Fock space $\mathcal{B}(\lt) \coloneqq \{ \mathcal{B} f : f \in \lt \}$. This space contains all exponentials $\mathcal{E}(\C)$. For functions supported on a half-line, we have the following elementary relation between the operators $\mathcal{G}, \, \mathcal{B}$, and $\mathcal{L}$.

\begin{lemma}\label{lma:GBLrelation}
    Let $f \in L^2(\R_+)$ and $z=x+i\omega \in \C$. Then
    \begin{equation}\label{eq:rel}
        \mathcal{G}f(x,-\omega) = e^{\pi i x \omega - \frac{\pi}{2}|z|^2} \mathcal{B}f(z) = \varphi(x) \cdot \mathcal{L}(f\varphi e^{2\pi i \omega (\cdot)})(-2\pi x).
    \end{equation}
\end{lemma}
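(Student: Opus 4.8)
The plan is to verify both equalities by direct computation, reducing each of the three expressions in \eqref{eq:rel} to the single integral
$$
I(x,\omega) \coloneqq \int_\R f(t)\, e^{-\pi(t-x)^2}\, e^{2\pi i \omega t}\, dt,
$$
which converges absolutely for every $(x,\omega) \in \R^2$ by the Cauchy--Schwarz inequality, since $f \in \lt$ and $t \mapsto e^{-\pi(t-x)^2}e^{2\pi i\omega t}$ lies in $\lt$. Unwinding the definition of the Gabor transform and using that $\varphi$ is real-valued gives $\mathcal{G}f(x,-\omega) = V_\varphi f(x,-\omega) = I(x,\omega)$ at once, so it remains to identify the other two quantities with $I(x,\omega)$.

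For the middle term, I would substitute $z = x+i\omega$ into the definition of $\mathcal{B}f$, separate off the $t$-independent exponential factors, and use $z^2 = x^2-\omega^2+2ix\omega$ together with $|z|^2 = x^2+\omega^2$ to check that
$$
e^{\pi i x\omega - \tfrac{\pi}{2}|z|^2}\cdot e^{-\tfrac{\pi}{2}z^2} = e^{-\pi x^2}.
$$
Since $2\pi t z = 2\pi tx + 2\pi i t\omega$, the integrand of $e^{\pi i x\omega - \frac{\pi}{2}|z|^2}\mathcal{B}f(z)$ then collapses to $f(t)\,e^{-\pi x^2 + 2\pi tx - \pi t^2}e^{2\pi i\omega t} = f(t)\,e^{-\pi(t-x)^2}e^{2\pi i\omega t}$, so this quantity equals $I(x,\omega)$. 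I would note in passing that this first equality in \eqref{eq:rel} uses only $f \in \lt$, not the support condition.

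For the last term, I would write out $\mathcal{L}\bigl(f\varphi e^{2\pi i\omega(\cdot)}\bigr)(-2\pi x) = \int_{\R_+} f(y)\, e^{-\pi y^2}\, e^{2\pi i\omega y}\, e^{2\pi xy}\,dy$, multiply by $\varphi(x)=e^{-\pi x^2}$, and complete the square via $-\pi x^2 - \pi y^2 + 2\pi xy = -\pi(y-x)^2$ to reach $\int_{\R_+} f(y)\,e^{-\pi(y-x)^2}e^{2\pi i\omega y}\,dy$. Here the hypothesis $f \in L^2(\R_+)$ enters: as $f$ vanishes almost everywhere on $(-\infty,0)$, the integral over $\R_+$ agrees with $I(x,\omega)$. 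I would also record that the displayed Laplace integral converges absolutely for every $x \in \R$ --- in particular for $x > 0$, where $-2\pi x$ lies outside $\overline{\mathbb H}$ --- because the Gaussian $e^{-\pi y^2}$ dominates $e^{2\pi xy}$; by Cauchy--Schwarz the integrand is in $L^1(\R_+)$, so $\mathcal{L}(f\varphi e^{2\pi i\omega(\cdot)})(-2\pi x)$ is unambiguously defined as a convergent integral.

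I do not anticipate a genuine obstacle: the statement is a bookkeeping identity among Gaussian integrals. The only points demanding a little care are the absolute convergence of all integrals involved (so that the algebraic rearrangements are legitimate), and the mild abuse of notation in reading $\mathcal{L}$ as a convergent integral at a possibly negative argument; both are handled by Cauchy--Schwarz against a Gaussian.
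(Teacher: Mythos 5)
Your proposal is correct and follows essentially the same route as the paper: the second equality is exactly the paper's computation (the factorization $\varphi(t-x)=\varphi(x)\varphi(t)e^{2\pi tx}$ is your completing-the-square step), while for the first equality the paper simply cites \cite[Proposition 3.4.1]{Groechenig} instead of redoing the Gaussian algebra as you do. Your added remarks on absolute convergence and on reading $\mathcal{L}$ at the negative argument $-2\pi x$ are sound and slightly more careful than the paper's write-up.
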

\begin{proof}
    The first identity is well-known and holds for every square-integrable function \cite[Proposition 3.4.1]{Groechenig}. The factorization $\varphi(t-x)=\varphi(x)\varphi(t)e^{2\pi t x}$ shows that
    $$
    \mathcal{G}f(x,-\omega) = \varphi(x) \int_{\R_+} f(t) \varphi(t) e^{2 \pi i \omega t} e^{2\pi t x} \, dt = \varphi(x) \cdot \mathcal{L} (f \varphi e^{2 \pi i \omega (\cdot)})(-2\pi x),
    $$
    which yields the second identity.
\end{proof}

In the remainder of the article, we utilize the property, that the equivalence relation "$\sim$" is invariant under composition with injective linear operators on $\lt$. In other words, if $Q : \lt \to Y$ is an injective linear operator from $\lt$ into some function space $Y$, then $f \sim h$, if and only if $Qf \sim Qh$. We refer to this property as the \emph{invariance property}. The invariance property, when combined with the first equality in \eqref{eq:rel} and Theorem \ref{thm:main}, directly implies Theorem \ref{thm:gabor_impl} from Section \ref{sec:1}. We proceed with the proof of the Gabor phase retrieval result in the intermediate space $L^2(\R_+)$.

\begin{theorem}\label{thm:general_laplace}
    Let $U  \subseteq \R_+$ be a uniqueness set for the Laplace transform, and let $\Lambda \subseteq \R$ be not contained in an arithmetic progression. Then $-U \times \Lambda$ is a uniqueness set for the Gabor phase retrieval problem in $L^2(\R_+)$.
\end{theorem}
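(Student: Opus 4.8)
My plan is to transfer the problem to the Bargmann--Fock space and then invoke Theorem~\ref{thm:main}, using the Laplace-transform structure to upgrade the partial modulus data into modulus agreement on a set to which Theorem~\ref{thm:main} applies. Concretely, given $f,h\in L^2(\R_+)$ with $|\mathcal{G}f|=|\mathcal{G}h|$ on $-U\times\Lambda$, I would first record, via the invariance property ($\mathcal{B}$ being injective and linear on $\lt$) together with the first identity of Lemma~\ref{lma:GBLrelation}, that $f\sim h$ is equivalent to $\mathcal{B}f\sim\mathcal{B}h$ and that the hypothesis rewrites as $|\mathcal{B}f(-u-i\lambda)|=|\mathcal{B}h(-u-i\lambda)|$ for all $u\in U$ and $\lambda\in\Lambda$. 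Since $\mathcal{B}(\lt)$ is a vector space of entire functions containing $\mathcal{E}(\C^*)$ and $-\Lambda$ is not contained in an arithmetic progression (arithmetic progressions being closed under negation), Theorem~\ref{thm:main} with $\theta=0$ will yield $\mathcal{B}f\sim\mathcal{B}h$, and hence the claim, once I have shown $|\mathcal{B}f(z)|=|\mathcal{B}h(z)|$ for all $z\in\R+i(-\Lambda)$. Moreover, by Remark~\ref{rem:acc_point} (applied with parameter set $-\Lambda$), it suffices to prove this equality on $(-\infty,0]+i(-\Lambda)$, because $(-\infty,0]$ is a set of uniqueness for entire functions.

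To do so, I would fix $\lambda\in\Lambda$ and set $g=f\varphi e^{-2\pi i\lambda(\cdot)}$ and $k=h\varphi e^{-2\pi i\lambda(\cdot)}$, which lie in $L^1(\R_+)$ because $f\varphi,h\varphi\in L^1(\R_+)$ by the Cauchy--Schwarz inequality. The second identity of Lemma~\ref{lma:GBLrelation} gives $\mathcal{G}f(-u,\lambda)=\varphi(-u)\,\mathcal{L}g(2\pi u)$ and the analogous formula for $h$, so the hypothesis becomes $|\mathcal{L}g(2\pi u)|=|\mathcal{L}k(2\pi u)|$ for every $u\in U$. The crux is to push this modulus identity from $2\pi U$ to all of $\R_+$. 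For this I would observe that for real $s\ge 0$ one has $\mathcal{L}\bar g(s)=\overline{\mathcal{L}g(s)}$, so that the convolution theorem gives $\mathcal{L}(g*\bar g)(s)=|\mathcal{L}g(s)|^2$, and likewise for $k$; therefore $\mathcal{L}(g*\bar g-k*\bar k)$ vanishes on $2\pi U$. Since $g*\bar g-k*\bar k\in L^1(\R_+)$ and $2\pi U$ is again a uniqueness set for the Laplace transform (a positive dilation of a Laplace-uniqueness set is one, via the substitution $y\mapsto y/2\pi$), I conclude $g*\bar g=k*\bar k$, and hence $|\mathcal{L}g(s)|^2=\mathcal{L}(g*\bar g)(s)=\mathcal{L}(k*\bar k)(s)=|\mathcal{L}k(s)|^2$ for every real $s\ge 0$.

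Feeding this back through Lemma~\ref{lma:GBLrelation} then gives, for every $u'\ge 0$, that $|\mathcal{G}f(-u',\lambda)|=\varphi(-u')|\mathcal{L}g(2\pi u')|=\varphi(-u')|\mathcal{L}k(2\pi u')|=|\mathcal{G}h(-u',\lambda)|$, and hence, using the first identity of Lemma~\ref{lma:GBLrelation} once more, $|\mathcal{B}f(-u'-i\lambda)|=|\mathcal{B}h(-u'-i\lambda)|$. As $\lambda\in\Lambda$ was arbitrary, this yields $|\mathcal{B}f|=|\mathcal{B}h|$ on $(-\infty,0]+i(-\Lambda)$, which by the reduction of the first paragraph completes the argument.

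I expect the propagation step of the second paragraph to be the main obstacle: the set $-U$ need not be a set of uniqueness for entire functions (for instance $U=\N$), so analyticity alone cannot bridge $2\pi U$ and $\R_+$. What rescues the argument is the Laplace structure, specifically the convolution theorem together with the observation that the restriction of $|\mathcal{L}g|^2$ to the positive axis is itself the Laplace transform of the $L^1(\R_+)$ function $g*\bar g$. Everything else is routine bookkeeping with the identities of Lemma~\ref{lma:GBLrelation}, the negation-invariance of arithmetic progressions, and a direct appeal to Theorem~\ref{thm:main}.
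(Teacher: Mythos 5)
Your proposal is correct and follows essentially the same route as the paper: reduce via Lemma~\ref{lma:GBLrelation} to a Laplace-transform statement, use the convolution theorem to recognize $|\mathcal{L}g|^2$ on the positive real axis as $\mathcal{L}(g*\bar g)$ so that the modulus identity propagates from $2\pi U$ to all of $\R_+$, and then conclude with Theorem~\ref{thm:main}, Remark~\ref{rem:acc_point}, and the invariance property. Your explicit remark that $\mathcal{L}\bar g(s)=\overline{\mathcal{L}g(s)}$ only for real $s$ is a welcome precision that the paper leaves implicit.
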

\begin{proof}
    Suppose that $f,h \in L^2(\R_+)$ satisfy
\begin{equation}\label{eq:ident}
    |\mathcal{G}f(z)| = |\mathcal{G}h(z)|, \quad z \in -U  \times \Lambda.
\end{equation}
We need to show that $f \sim h$. To that end, fix an element $\omega \in \Lambda$ and let $u \in U$. Combining equation \eqref{eq:ident} with Lemma \ref{lma:GBLrelation}, and taking squares implies that
$$
|\mathcal{L}(f\varphi e^{-2\pi i \omega (\cdot)})(u)|^2 = |\mathcal{L}(h\varphi e^{-2\pi i \omega (\cdot)})(u)|^2, \quad u \in 2\pi U.
$$
Using the convolution theorem for the Laplace transform and the fact that $f\varphi \in L^1(\R_+)$, yields the identity
$$
|\mathcal{L}(f\varphi e^{-2\pi i \omega (\cdot)})(u)|^2  =  \mathcal{L}(f\varphi e^{-2\pi i \omega (\cdot)} * \overline f \varphi e^{2\pi i \omega (\cdot)})(u).
$$
An analogous identity holds for $f$ replaced by $h$. Since a dilation of a uniqueness set for the Laplace transform remains a uniqueness set, it follows that $2\pi U $ is a uniqueness set for the Laplace transform. As a result, we have that
$$
|\mathcal{L}(f\varphi e^{-2\pi i \omega (\cdot)})(u)|^2 = |\mathcal{L}(h\varphi e^{-2\pi i \omega (\cdot)})(u)|^2, \quad u \in \R_+.
$$
We now use the second equality in Lemma \ref{lma:GBLrelation}, which relates the Bargmann transform to the Laplace transform. In view of this identity and the arbitrariness of $\omega \in \Lambda$, we obtain
$$
|\mathcal{B}f(z)|^2 = |\mathcal{B}h(z)|^2, \quad z \in (-\infty,0] - i \Lambda.
$$
Observe that $-\Lambda$ is not contained in an arithmetic progression, and that $(-\infty,0]$ contains an accumulation point. A combination of Theorem \ref{thm:main} and Remark \ref{rem:acc_point}, together with the fact that $\mathcal{B}f$ and $\mathcal{B}h$ are entire functions, shows that $\mathcal B f \sim \mathcal B h$. The invariance property implies $f \sim h$.
\end{proof}

According to Corollary \ref{cor:laplace_polya}, every set $U$ satisfying \eqref{eq:divergence_assumption} is a uniqueness set for the Laplace transform. Hence, Theorem \ref{thm:main_laplace} is a direct consequence of Theorem \ref{thm:general_laplace} and Corollary \ref{cor:laplace_polya}. We conclude with the proof of Theorem \ref{thm:wright} from Section \ref{sec:1}.

\begin{proof}[Proof of Theorem \ref{thm:wright}]
    Observe that for any $f \in \lt$ it holds that $T_\lambda f(t) = \mathcal{G}f(\lambda,t)$. In view of Lemma \ref{lma:GBLrelation}, the condition
    $$
    |T_\lambda f| = |T_\lambda h|, \quad \lambda \in \Lambda
    $$
    for some $f,h \in \lt$ is equivalent to
    $$
    |\mathcal{B}f(z)| = |\mathcal{B}h(z)|, \quad z \in \Lambda + i \R = e^{i\frac{\pi}{2}}(\R - i \Lambda).
    $$
    Hence, Theorem \ref{thm:main} shows that $\{ T_\lambda : \lambda \in \Lambda \}$ does phase retrieval if and only if $-\Lambda$ is not contained in an arithmetic progression. The latter is equivalent to $\Lambda$ not being contained in an arithmetic progression.

    In view of the invariance property, $\mathcal{S} \coloneqq \{ T_{\alpha} \circ \mathfrak F^{-1},T_{\beta}\circ \mathfrak F^{-1},T_\gamma\circ \mathfrak F^{-1} \}$ does phase retrieval if and only if $\{ T_\alpha, T_\beta, T_\gamma \}$ does phase retrieval. According to the first part of the statement, this is equivalent to the condition that the set $\{\alpha,\beta,\gamma \}$ is not contained in an arithmetic progression, which happens precisely in the situation, when the two distances $\beta - \alpha$, and $\gamma - \beta$ are linearly independent over $\Q$. Equivalently, $\frac{\beta-\alpha}{\gamma-\beta} \not \in \Q$. 
\end{proof}

\bibliographystyle{abbrv}
\bibliography{bibfile}

\begin{thebibliography}{10}

\bibitem{bart}
R.~Alaifari, F.~Bartolucci, and M.~Wellershoff.
\newblock Phase retrieval of bandlimited functions for the wavelet transform.
\newblock {\em Appl. Comput. Harmon. Anal.}, 64:102--117, 2023.

\bibitem{alaifariGrohs}
R.~Alaifari and P.~Grohs.
\newblock {Phase Retrieval In The General Setting Of Continuous Frames For
  Banach Spaces}.
\newblock {\em SIAM J. Math. Anal.}, 49, 2016.

\bibitem{alaifari2020phase}
R.~Alaifari and M.~Wellershoff.
\newblock {Phase Retrieval from Sampled {Gabor} Transform Magnitudes:
  Counterexamples}.
\newblock {\em J. Fourier Anal. Appl.}, 28(1):9, 2021.

\bibitem{alharbi2}
W.~Alharbi, S.~Alshabhi, D.~Freeman, and D.~Ghoreishi.
\newblock Locality and stability for phase retrieval.
\newblock {\em Sampling Theory, Signal Processing, and Data Analysis},
  22(1):10, 2024.

\bibitem{alharbi1}
W.~Alharbi, D.~Freeman, D.~Ghoreishi, C.~Lois, and S.~Sebastian.
\newblock Stable phase retrieval and perturbations of frames.
\newblock {\em Proc. Amer. Math. Soc. Ser. B}, 10(31):353--368, 2023.

\bibitem{bandeira_savingphase}
A.~S. Bandeira, J.~Cahill, D.~G. Mixon, and A.~A. Nelson.
\newblock Saving phase: injectivity and stability for phase retrieval.
\newblock {\em Appl. Comput. Harmon. Anal.}, 37(1):106--125, 2014.

\bibitem{Bartusel2023}
D.~Bartusel.
\newblock {Injectivity Conditions for STFT Phase Retrieval on $\Z$, $\Z_d$ and
  $\R^d$}.
\newblock {\em J. Fourier Anal. Appl.}, 29(4), 2023.

\bibitem{bianchi}
G.~Bianchi, R.~J. Gardner, and M.~Kiderlen.
\newblock Phase retrieval for characteristic functions of convex bodies and
  reconstruction from covariograms.
\newblock {\em J. Am. Math. Soc.}, 24(2):293--343, 2011.

\bibitem{Bodmann2015}
B.~G. Bodmann and N.~Hammen.
\newblock Stable phase retrieval with low-redundancy frames.
\newblock {\em Adv. Comput. Math.}, 41(2):317--331, Apr 2015.

\bibitem{Bojarovska2016}
I.~Bojarovska and A.~Flinth.
\newblock {Phase Retrieval from Gabor Measurements}.
\newblock {\em J. Fourier Anal. Appl.}, 22(3):542--567, 2016.

\bibitem{daub}
J.~Cahill, P.~Casazza, and I.~Daubechies.
\newblock {Phase retrieval in infinite-dimensional Hilbert spaces}.
\newblock {\em Trans. Amer. Math. Soc. Ser. B}, 3(3):63--76, 2016.

\bibitem{ChalendarPartington2024}
I.~Chalendar and J.~R. Partington.
\newblock Phase retrieval on circles and lines.
\newblock {\em Canadian Mathematical Bulletin}, pages 1--9, 2024.

\bibitem{CONCA2015346}
A.~Conca, D.~Edidin, M.~Hering, and C.~Vinzant.
\newblock An algebraic characterization of injectivity in phase retrieval.
\newblock {\em Appl. Comput. Harmon. Anal.}, 38(2):346--356, 2015.

\bibitem{corbett_hurst_1977}
J.~V. Corbett and C.~A. Hurst.
\newblock Are wave functions uniquely determined by their position and momentum
  distributions?
\newblock {\em The Journal of the Australian Mathematical Society. Series B.
  Applied Mathematics}, 20(2):182–201, 1977.

\bibitem{freeman2023stable}
D.~Freeman, T.~Oikhberg, B.~Pineau, and M.~A. Taylor.
\newblock Stable phase retrieval in function spaces.
\newblock {\em Math. Ann.}, pages 1--43, 2023.

\bibitem{fuhr2023phase}
H.~F{\"u}hr and V.~Oussa.
\newblock {Phase Retrieval for Nilpotent Groups}.
\newblock {\em J. Fourier Anal. Appl.}, 29(4):47, 2023.

\bibitem{Groechenig}
K.~Gröchenig.
\newblock {\em Foundations of Time-Frequency Analysis}.
\newblock Birkhäuser Basel, 2001.

\bibitem{grohsLiehr3}
P.~Grohs and L.~Liehr.
\newblock {On Foundational Discretization Barriers in STFT Phase Retrieval}.
\newblock {\em J. Fourier Anal. Appl.}, 28(39), 2022.

\bibitem{grohsliehr1}
P.~Grohs and L.~Liehr.
\newblock {Injectivity of Gabor phase retrieval from lattice measurements}.
\newblock {\em Appl. Comput. Harmon. Anal.}, 62:173--193, 2023.

\bibitem{grohsLiehr4}
P.~Grohs and L.~Liehr.
\newblock {Non-uniqueness theory in sampled STFT phase retrieval}.
\newblock {\em SIAM J. Math. Anal.}, 55:4695--4726, 2023.

\bibitem{multiwindow}
P.~Grohs, L.~Liehr, and M.~Rathmair.
\newblock {Multi-window STFT phase retrieval: lattice uniqueness}.
\newblock {\em arXiv:2207.10620}, 2022.

\bibitem{perturbations}
P.~Grohs, L.~Liehr, and M.~Rathmair.
\newblock {Phase retrieval in Fock space and perturbation of Liouville sets}.
\newblock {\em arXiv:2308.00385}, 2023.

\bibitem{GrohsRathmair}
P.~Grohs and M.~Rathmair.
\newblock {Stable Gabor Phase Retrieval and Spectral Clustering}.
\newblock {\em Comm. Pure Appl. Math.}, 72(5):981--1043, 2019.

\bibitem{Jaming}
P.~Jaming.
\newblock Uniqueness results in an extension of {Pauli's} phase retrieval
  problem.
\newblock {\em Appl. Comput. Harmon. Anal.}, 37(3):413--441, 2014.

\bibitem{jamingPerscomm}
P.~Jaming.
\newblock Personal communications.
\newblock 2022.

\bibitem{Jaming2023}
P.~Jaming and M.~Rathmair.
\newblock {Uniqueness of phase retrieval from three measurements}.
\newblock {\em Adv. Comput. Math.}, 49(4):47, 2023.

\bibitem{KuipersNiederreiter}
L.~Kuipers and H.~Niederreiter.
\newblock {\em Uniform Distribution of Sequences}.
\newblock John Wiley \& Sons, 1974.

\bibitem{Lai2021}
C.-K. Lai, F.~Littmann, and E.~S. Weber.
\newblock {Conjugate Phase Retrieval in Paley--Wiener Space}.
\newblock {\em J. Fourier Anal. Appl.}, 27(6):89, 2021.

\bibitem{lang2005algebra}
S.~Lang.
\newblock {\em Algebra}.
\newblock Graduate Texts in Mathematics. Springer New York, 2005.

\bibitem{Lerch}
M.~Lerch.
\newblock {Sur un point de la théorie des fonctions génératrices d’Abel}.
\newblock {\em Acta Math.}, 27:339 -- 351, 1903.

\bibitem{Mallat2015}
S.~Mallat and I.~Waldspurger.
\newblock {Phase Retrieval for the Cauchy Wavelet Transform}.
\newblock {\em J. Fourier Anal. Appl.}, 21(6):1251--1309, 2015.

\bibitem{perez_2021}
R.~Perez.
\newblock A note on the phase retrieval of holomorphic functions.
\newblock {\em Canad. Math. Bull.}, 64(4):779–786, 2021.

\bibitem{polya}
G.~Pólya and G.~Szegö.
\newblock {\em {Aufgaben und Lehrsätze aus der Analysis: Erster Band}}.
\newblock Springer Berlin, Heidelberg, 1925.

\bibitem{Titchmarsh}
E.~C. Titchmarsh.
\newblock {The Zeros of Certain Integral Functions}.
\newblock {\em Proc. Lond. Math. Soc.}, pages 283--302, 1926.

\bibitem{VOGT1978365}
A.~Vogt.
\newblock Position and momentum distributions do not determine the quantum
  mechanical state.
\newblock In A.~Marlow, editor, {\em Mathematical Foundations of Quantum
  Theory}, pages 365--372. Academic Press, 1978.

\bibitem{Wellershoff2022}
M.~Wellershoff.
\newblock {Sampling at Twice the Nyquist Rate in Two Frequency Bins Guarantees
  Uniqueness in Gabor Phase Retrieval}.
\newblock {\em J. Fourier Anal. Appl.}, 29(1):7, 2022.

\bibitem{wellershoff2022injectivity}
M.~Wellershoff.
\newblock {Injectivity of sampled Gabor phase retrieval in spaces with general
  integrability conditions}.
\newblock {\em J. Math. Anal. Appl.}, 530(2):127692, 2024.

\bibitem{lines}
M.~Wellershoff.
\newblock {Phase retrieval of entire functions and its implications for Gabor
  phase retrieval}.
\newblock {\em J. Funct. Anal.}, 286(11):110403, 2024.

\end{thebibliography}

\end{document}